\numberwithin{equation}{section}
\theoremstyle{plain}
\newtheorem{theorem}{Theorem}[section]
\newtheorem{proposition}{Proposition}[section]
\newtheorem{corollary}{Corollary}[section]
\theoremstyle{definition}
\newtheorem{definition}{Definition}[section]
\newtheorem{example}{Example}[section]
\theoremstyle{remark}
\newtheorem{remark}{Remark}[section]
\newcommand{\interior}[1]{%
	{\kern0pt#1}^{\mathrm{o}}%
}
\def\Cc{{\mathcal C}}
\def\dirac{{\partial\!\!\!/}}
\def\Tc{{\mathcal T}}
\def\eh{{{\bf{\rm{e}}}}}
\def\Fc{{\mathcal F}}
\def\Ec{{\mathcal E}}
\def\Uc{{\mathcal U}}
\def\Dc{{\mathcal D}}
\def\ka{{K_{{{\rm area}}}}}
\title[The scalar curvature in conical manifolds]{The scalar curvature in conical manifolds: some results on existence and obstructions}%
\author{Levi Lopes de Lima}
\address{Universidade Federal do  Cear\'a,
	Departamento de Matem\'atica, Campus do Pici, R. Humberto Monte, s/n, 60455-760,
	Fortaleza/CE, Brazil}
\thanks{Partially supported by CNPq/Brazil grant
	312485/2018-2 and by FUNCAP/CNPq/PRONEX grant 00068.01.00/15.}
\email{levi@mat.ufc.br}
\begin{document}

	\begin{abstract}
		We first show that existence results due to Kazdan and Warner \cite{kazdan1975existence} and Cruz and Vit\'orio \cite{cruz2019prescribing} can be extended to the category of manifolds with an isolated conical singularity. More precisely, we check that, under suitable conditions on the link manifold, any bounded and smooth function which is   negative somewhere is the scalar curvature of some conical metric (with the boundary being minimal whenever it is non-empty). By way of comparison, we complement this analysis by indicating how index theory, as developed by Albin and Gell-Redman \cite{albin2016index}, may be used to transfer to this conical setting some of the 
		classical obstructions to the existence of metrics with positive scalar curvature in the spin context. 
		In particular,  we use a version of the notion of infinite $K$-area to obstruct such metrics. 
	\end{abstract}
	
	\maketitle

	\section{Introduction and statement of the results}
	\label{intro}
	
	The general problem of prescribing the scalar curvature function in a given smooth closed manifold is a central theme in Riemannian Geometry. Since in principle bounds on this invariant have little or no direct influence at all on the large scale geometry of the underlying metric, it is expected that a huge amount of functions might be realized as the scalar curvature. In a sense this has been confirmed by Kazdan and Warner \cite{kazdan1975existence}, who showed that any function on a closed manifold of dimension $n\geq 3$ which is negative somewhere is the scalar curvature of some metric.

	The main purpose of this note is to suitably extend this result to compact manifolds carrying singularities. Very likely, the results described here hold true more generally for manifolds carrying edge-like singularities as in \cite{albin2016index,jeffres2010conformal,bahuaud2019long,   akutagawa2014yamabe}, but we restrict ourselves to the subclass of manifolds with an isolated conical singularity.

	\begin{definition}\label{iso:con:def}
		Let $X$ be a topological space which is smooth everywhere except possibly at some  $p\in X$, which has a  neighborhood $\mathcal C$  diffeomorphic to $(0,1)\times L$, where $L$ is a closed smooth manifold of dimension $d=n-1$. Additionally, 
		assume that there exists a Riemannian metric $g$ on $X_0:=X\backslash\{p\}$ such that
		with respect to conical coordinates $(x,z)\in \mathcal C$, where $x$ is a defining funtion for $\{p\}$ and $z$ is a local coordinate in $L$, it can written as 
		$g=g_0+h$, where $g_0=dx^2+x^2h_0$, with $h_0$ being a fixed metric in $L$, and $h$ is a smooth $2$-tensor satisfying $|h|_{g_0}=O(x^\lambda)$, $\lambda\geq 2$. 
		We then say that 
		$g$ is a {\em conical metric} and
		$(X,g)$ is a {\em conical manifold}. 
		Also, the manifold $(L,h_0)$ is called the {\em link}. 
	\end{definition}
	
	\begin{remark}\label{conical:type}
		An alternate way of expressing the conical metric is obtained after setting  $x=e^{-t}$, so that $g=e^{-2t}(dt^2+h_0+O(1))$, where $(t,z)$ varies in the half-cylinder $\widehat{\mathcal C}=[0,+\infty)\times L$. 
	\end{remark}

	We recall that, similarly to the existence result by Kazdan and Warner mentioned earlier, it has been proved in \cite[Theorem 1.4]{cruz2019prescribing} that any smooth function on $X$ which is negative somewhere in $X\backslash \partial X$ is the scalar curvature $\kappa_g$ of a metric $g$ with respect to which $\partial X$ is minimal; see also \cite{cruz2020critical} for a sharper result under a volume constraint. 
	This raises the question of whether similar results hold in the conical setting. Our main result provides an affirmative answer to those questions under natural assumptions. For this we consider the self-adjoint elliptic operator
	\[
	\mathcal L^n_{h_0}=-\alpha\Delta_{h_0}+\kappa_{h_0},\quad \alpha=\frac{4(n-1)}{n-2},\quad n=\dim X_0\geq 4,
	\]
	acting on $C^\infty(L)$ and let $\lambda_{h_0}$ be its lowest eigenvalue.
	
	\begin{theorem}\label{exist:quasi:pos}
		Let $(X,g)$ be a conical manifold whose link $(L,h_0)$ satisfies $\lambda_{h_0}>0$.
		Then any bounded and smooth function which is negative somewhere in $X_0\backslash \mathcal C$ is the scalar curvature of some conical metric in $X$. Also, the same result holds true if $\partial X\neq\emptyset$, in which case we additionally have that $\partial X$ is minimal with respect to the desired metric.
	\end{theorem}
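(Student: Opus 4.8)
The plan is to follow the conformal method used by Kazdan--Warner \cite{kazdan1975existence} and Cruz--Vit\'orio \cite{cruz2019prescribing}: to fix a suitable \emph{conical} background metric $\bar g$ on $X$ and to look for the desired metric in the form $\widetilde g=u^{4/(n-2)}\bar g$ with $u>0$. Since under such a change $\kappa_{\widetilde g}=u^{-\frac{n+2}{n-2}}(-\alpha\Delta_{\bar g}u+\kappa_{\bar g}u)$, and since, when $\partial X$ is $\bar g$-minimal, it is $\widetilde g$-minimal precisely when $\partial_\nu u=0$, the problem reduces to producing $u>0$ on $X_0$ with
\[
-\alpha\Delta_{\bar g}u+\kappa_{\bar g}u=f\,u^{\frac{n+2}{n-2}}\quad\text{on }X_0,\qquad \partial_\nu u=0\ \text{ on }\partial X,
\]
for a conical $\bar g$ with minimal boundary, and then checking that $u$ extends to a conical conformal factor at $p$. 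The hypothesis $\lambda_{h_0}>0$ will be used twice. First, $\mathcal L^n_{h_0}$ is the conformal Laplacian of the $(n-1)$-manifold $L$ with $\alpha=\tfrac{4(n-1)}{n-2}$ in place of the larger constant $\tfrac{4(n-2)}{n-3}$, which can only decrease the bottom eigenvalue; hence $\lambda_{h_0}>0$ forces $(L,h_0)$ to have positive Yamabe invariant. Second, $\lambda_{h_0}>0$ is exactly the condition under which the model operator $-\alpha\Delta_{\bar g}+\kappa_{\bar g}$ near the cone tip satisfies a Hardy-type inequality, which is what makes the linear theory on the incomplete manifold $X_0$ work.

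For the background, Yamabe-positivity of $L$ provides a metric $\bar h_0$ on $L$ of constant scalar curvature $(n-1)(n-2)$ (solve the Yamabe problem on $L$ and rescale), so that the exact cone $dx^2+x^2\bar h_0$ is scalar-flat. Extend this cone over $\mathcal C$ to a conical metric $g^\sharp$ on $X$ which has minimal boundary, equals the cone near $p$, and satisfies $\kappa_{g^\sharp}\le 0$ everywhere and $\kappa_{g^\sharp}<0$ somewhere in $X_0\setminus\mathcal C$. Then $\lambda_1(-\alpha\Delta_{g^\sharp}+\kappa_{g^\sharp})<0$, as one sees by testing with the constant function, which lies in the form domain of the distinguished (Friedrichs) realization since the metric is conical --- hence of finite volume --- and $\kappa_{g^\sharp}$ vanishes near $p$. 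Conformally changing $g^\sharp$ by the corresponding positive ground state $\phi_1$ --- a genuine eigenfunction because $\lambda_{h_0}>0$, with $\phi_1\to c>0$ at $p$ --- yields a conical background $\bar g=\phi_1^{4/(n-2)}g^\sharp$, again with link $\bar h_0$ and minimal boundary, and now with $-c_1\le\kappa_{\bar g}\le -c_0<0$ on $X_0$. Keeping the link equal to $\bar h_0$ is both convenient (then $u^{4/(n-2)}\bar g$ is conical for any $u\to\text{const}>0$ at the tip) and necessary (since $f$ is bounded, the produced metric must have bounded scalar curvature near $p$, which forces the link to have constant scalar curvature $(n-1)(n-2)$).

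Now run the method of sub- and super-solutions. A small constant $\underline u\equiv\varepsilon$ is a subsolution everywhere: where $f<0$ the required inequality is $\kappa_{\bar g}\varepsilon^{-\frac{4}{n-2}}\le f$, which holds for $\varepsilon$ small since $f$ is bounded below and $\kappa_{\bar g}\le -c_0$, and elsewhere it is clear. For a supersolution $\bar u\ge\underline u$ one cannot use a constant over the set $\{f>0\}$; following \cite{kazdan1975existence,cruz2019prescribing}, on the compact region $\{x\ge\delta\}$ one builds a supersolution that is large where $f>0$ by solving local Dirichlet problems with large boundary data, and one patches it (by taking a minimum) across $\{x\simeq\delta\}$ with a bounded barrier of the form $A-Bx^2$ on the incomplete cone $\{x<\delta\}$, the parameters being matched using that the first Dirichlet eigenvalue there is of order $\delta^{-2}$. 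With $0<\underline u\le\bar u$ at hand, the monotone iteration produces a smooth solution $u$ with $\underline u\le u\le\bar u$. This step uses the linear analysis of $-\alpha\Delta_{\bar g}+\kappa_{\bar g}$ on $X_0$ --- invertibility after adding a large constant, weighted Schauder estimates up to both the conical tip and $\partial X$ (with the Neumann condition there), and local compactness --- which is available precisely because $\lambda_{h_0}>0$ keeps the model operator at the Hardy threshold; compare \cite{akutagawa2014yamabe,albin2016index}.

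It remains to analyse $u$ near $p$. There the right-hand side $f\,u^{\frac{n+2}{n-2}}$ is bounded, so a standard indicial/barrier bootstrap gives an asymptotic expansion of $u$ at $p$ with leading term a positive constant $u_\infty$ (the next exponents being the positive indicial roots of $-\alpha\Delta_{\bar g}+\kappa_{\bar g}$ at the tip, determined by the Laplace spectrum of $(L,\bar h_0)$). Hence $\widetilde g=u^{4/(n-2)}\bar g$ is a conical metric on $X$ with scalar curvature $f$, with respect to which $\partial X$ is minimal; this proves the theorem. I expect the main obstacle to be this linear analysis at the conical point --- setting up the Fredholm framework for $-\alpha\Delta_{\bar g}+\kappa_{\bar g}$ in weighted H\"older spaces and the barrier estimates that make the sub/super-solution scheme yield a \emph{bounded}, hence conically asymptotic, solution. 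By contrast, the classical difficulty of producing a supersolution that dominates the positive part of $f$ is confined to a compact region away from the singularity and is handled exactly as in the works cited above.
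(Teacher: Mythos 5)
There is a genuine gap, and it is in the core strategy rather than in the conical analysis. You try to realize the \emph{given} sign-changing $f$ by solving $-\alpha\Delta_{\bar g}u+\kappa_{\bar g}u=fu^{(n+2)/(n-2)}$ within the fixed conformal class of a background $\bar g$ with $\kappa_{\bar g}\leq -c_0<0$, via sub- and supersolutions. This cannot work for all bounded smooth $f$ that are merely negative somewhere: if $u>0$ solved the equation, then on any open set $\Omega$ where $f\geq 0$ one would have $-\alpha\Delta_{\bar g}u+\kappa_{\bar g}u\geq 0$ with $u>0$, so by Barta's inequality the Dirichlet bottom eigenvalue of $-\alpha\Delta_{\bar g}+\kappa_{\bar g}$ on $\Omega$ would be nonnegative. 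But if $f$ is positive on a large region (say everywhere except a tiny ball in $X_0\backslash\mathcal C$ where it dips below zero), that Dirichlet eigenvalue is at most $o(1)-c_0<0$, so no positive solution exists in the conformal class of $\bar g$ at all. This is exactly why Kazdan--Warner do \emph{not} prescribe a general sign-changing $f$ conformally; the supersolution you sketch (``local Dirichlet problems with large boundary data'' over $\{f>0\}$) is the step that fails, and no barrier construction can rescue it. The missing idea is the diffeomorphism mechanism: (i) solve $A(u)=f$ only for $f$ close to the constant $-{\bf 1}$, by the inverse function theorem around a conical metric of constant scalar curvature $-1$; (ii) use the approximation lemma asserting that, whenever $\min\phi<\phi'<\max\phi$, one can make $\phi\circ\Psi$ arbitrarily close to $\phi'$ in $\mathcal H^{0,p}_\gamma$ for a diffeomorphism $\Psi$ of bounded distortion; (iii) apply this with $\phi=K\widetilde f$ and $\phi'=-{\bf 1}$, realize $K\widetilde f\circ\Psi$ conformally, and then pull back by $\Psi^{-1}$ and rescale. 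The final metric is a pullback of a conformal deformation, not a conformal deformation of the background, and this is essential.

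For comparison, the paper's proof runs as follows: under $\lambda_{h_0}>0$ it invokes the conical Yamabe theory of Akutagawa--Carron--Mazzeo (after injecting negative curvature so that $Y^c([g]_c)<0<Y_L$) to get a conformal conical metric $\mathfrak g$ with $\kappa_{\mathfrak g}=-1$ (and minimal boundary in the bordered case); it then establishes that the linearization $-\alpha\Delta_{\mathfrak g}+\frac{4}{n-2}$ is Fredholm of index $0$, in fact invertible, on the weighted Mellin--Sobolev spaces $\mathcal H^{s,p}_\gamma$ for $0\leq\gamma<n/2$ (with Neumann conditions when $\partial X\neq\emptyset$), applies the inverse function theorem for $f$ near $-{\bf 1}$, bootstraps to show the solution tends to a positive constant at the tip so the new metric is again conical, and finally uses the approximation-by-diffeomorphism step above. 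Your preliminary construction of a negatively pinched conical background and your remarks on $\lambda_{h_0}>0$ implying Yamabe positivity of the link are in the right spirit, but without step (ii)--(iii) the argument proves a statement that is false in the claimed generality.
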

	
	\begin{remark}\label{sca::obst}
		As $x\to 0$, 
		\begin{equation}\label{scal:asym}
			\kappa_g=x^{-2}\left(\kappa_{h_0}-d(d-1)\right)+O(x^{-1}),\quad d=n-1,
		\end{equation}
		so that $\kappa_{h_0}=d(d-1)$ is a necessary condition for having $\kappa_g=O(x^{-1})\in L^q(X_0,g)$, for some $q>n/2$. 
		As explained in \cite[Section 2.3]{akutagawa2014yamabe}, it suffices to assume that $\lambda_{h_0}>0$ to make sure that $\kappa_g$ satisfies this key integrability condition after possibly conformally deforming $g$ by a suitable power of the original defining function. The solution of the Yamabe problem in conical spaces \cite[Theorem 2.5]{akutagawa2014yamabe} then allows us to find a conformal conical metric $\mathfrak g$ with constant negative scalar curvature, which happens to be the first step in the proof of Theorem \ref{exist:quasi:pos}; see Section \ref{exist:scal} for more details. We note, however, that the metric $\mathfrak g$ so obtained is {\em not} necessarily quasi-isometric to the background metric $g$. In fact, this only happens if we make the much stronger requirement that $\lambda_{h_0}=d(d-1)$, in which case the preliminary conformal deformation mentioned above is not needed.  Finally, note that $\mathcal L^n_{h_0}=\mathcal L_{h_0}^{d}+\beta_n\Delta_{h_0}$, where $\beta_n>0$ and 
		\[
		\mathcal L^{d}_{h_0}=-\frac{4(d-1)}{d-2}\Delta_{h_0}+\kappa_{h_0}
		\]
		is the conformal Laplacian of $(L,h_0)$. Thus, $\lambda_{h_0}>0$ implies $\mathcal L^d_{h_0}>0$, that is, the link is Yamabe positive.
	\end{remark}

	The existence result by Kazdan and Warner should be contrasted
	with the well-known obstructions 
	describing how the
	existence of positive scalar curvature metrics interacts with the topology of the underlying manifold in the spin setting. 
	Indeed, starting with the seminal work by Lichnerowicz \cite{lichnerowicz1963spineurs}, who identified the non-vanishing of the $\widehat A$-genus of a closed spin manifold as an obstruction, 
	the machinery based on Dirac operators has been consistently used to establishing groundbreaking results in this area \cite{gromov1980spin,gromov1983positive}. More recently, Gromov 
	was able to recast some of his classical results with Lawson in terms of the notion of $K$-area, a topological invariant of manifolds inspired by $K$-theory. More precisely, infinite $K$-area obstructs metrics with positive scalar curvature in the spin setting \cite[Section 5]{gromov1996positive}.

	Our next aim is to complement the existence result in Theorem \ref{exist:quasi:pos} by indicating how the index theory in \cite{albin2016index} may be used to transfer the above mentioned obstructions to the category of manifolds carrying singularities. 
	Thus, assuming that  the underlying manifold $X$ is spin,  the choice  of  a conical metric $g$ on $X_0$ allows us to consider the associated  Dirac operator
	$\dirac:\Gamma(S_{X_0})\to \Gamma(S_{X_0})$. Here, $S_{X_0}$ denotes the spinor bundle of $X_0$,
	equipped with the Clifford product
	$\mathfrak c:\Gamma(TX_0)\to\Gamma({{\rm End}}(S_{X_0}))$. The Weitzenb\"ock
	decomposition for the corresponding Dirac Laplacian is
	\begin{equation}\label{dec}
		\dirac^2=
		{\nabla}^*\nabla+\frac{1}{4}\kappa_g,
	\end{equation}
	where ${\nabla}^*\nabla$ is the Bochner Laplacian of
	$S_{X_0}$ and $\kappa_g$ is the scalar curvature of $g$.

	It is proved in \cite[Theorem 1.1]{albin2016index} that $\dirac$, viewed as an unbounded, densely defined operator in the appropriate $L^2$ space of spinors,
	is essentially self-adjoint if
	a certain `'geometric Witt assumption'', displayed here in (\ref{geo:witt}) below, 
	holds true; see Theorem \ref{albin:mellin} for a rewording of this fundamental result in the setting of the so-called Mellin-Sobolev spaces introduced in Section \ref{exist:scal}. 
	We note that this Witt assumption
	holds whenever  $\kappa_g\geq 0$ in the conical region $\mathcal C$.
	If we assume further that $n=2k$ then 
	${\dirac}$ induces a Dirac operator $\prescript{c}{}{D}=\prescript{c}{}{D}^+\oplus \prescript{c}{}{D}^-$ on the link $(L,h_0)$; see (\ref{induced:dir}). We then consider the {\em adjusted} $\hat A$-genus of $(X,g)$:
	\begin{equation}\label{ahat:con}
		\prescript{c}{}{\widehat A}(X,g):=\int_{X_0}\widehat A(TX_0)+\int_L\mathcal T\widehat A(TX_0)-\frac{1}{2}\eta^+(0),
	\end{equation}
	where $\widehat{A}(TX_0)$ is the $\widehat{A}$-class
	of $TX_0$, $\mathcal T\widehat A(TX_0)$ is the associated transgression, $\eta^+(0)$ is the eta invariant of $\prescript{c}{}{D}^+$ (see Section \ref{con:ind:for:s}) and integration picks the top degree term in each case.
	Under these conditions, the main result in \cite{albin2016index}
	identifies this invariant to the index of the corresponding chiral Dirac operator ${\dirac}^+$. Precisely,
	\begin{equation}\label{indfor:alb:gell}
		{\rm ind}\,{\dirac}^+=\prescript{c}{}{\widehat A}(X,g).
	\end{equation} 
	Here, we specialize to the case $Y=\{p\}$ in their notation.
	The next result is just a restatement of part of \cite[Theorem 1.3]{albin2016index}; see also Corollary \ref{vanis}.

	\begin{theorem}\label{albin:top} 
		If $(X,g)$ as above is such that its scalar curvature is non-negative everywhere and positive somewhere then $\prescript{c}{}{\widehat A}(X,g)=0$. 
	\end{theorem}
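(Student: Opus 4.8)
The plan is to run the classical Lichnerowicz vanishing argument on the conical manifold $(X,g)$, the only new feature being that the integration by parts near the singularity $p$ must be justified; this is exactly where the hypothesis $\kappa_g\geq 0$, combined with the analytic input of \cite{albin2016index}, will be used. Throughout I take $\partial X=\emptyset$, consistently with the form of the invariant in (\ref{ahat:con}); the case with minimal boundary would require imposing the appropriate boundary condition on $\dirac$.

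First I would note that $\kappa_g\geq 0$ on $X_0$ forces $\kappa_g\geq 0$ on the conical region $\mathcal C$, so the geometric Witt assumption (\ref{geo:witt}) holds. By Theorem \ref{albin:mellin} (a restatement of \cite[Theorem 1.1]{albin2016index}) this makes $\dirac$, viewed as an unbounded operator on the relevant $L^2$ space of spinors, essentially self-adjoint; and since $n=2k$, the index formula (\ref{indfor:alb:gell}), namely $\mathrm{ind}\,\dirac^+=\prescript{c}{}{\widehat A}(X,g)$, is available. Because essential self-adjointness gives $(\dirac^+)^*=\dirac^-$, we have $\mathrm{ind}\,\dirac^+=\dim\ker\dirac^+-\dim\ker\dirac^-$, and the theorem will follow once we show that the full $L^2$-kernel $\ker\dirac=\ker\dirac^+\oplus\ker\dirac^-$ vanishes.

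To prove $\ker\dirac=0$, I would take $\psi\in L^2(X_0;S_{X_0})$ with $\dirac\psi=0$ (smooth on $X_0$ by elliptic regularity for $\dirac$) and use essential self-adjointness to approximate it in the graph norm by sections $\psi_j\in C^\infty_c(X_0;S_{X_0})$ with $\psi_j\to\psi$ and $\dirac\psi_j\to 0$ in $L^2$. Since each $\psi_j$ is supported in the smooth locus, the Weitzenb\"ock decomposition (\ref{dec}) integrates by parts with no boundary contribution, giving
\[
\|\dirac\psi_j\|_{L^2}^2=\|\nabla\psi_j\|_{L^2}^2+\frac14\int_{X_0}\kappa_g\,|\psi_j|^2\,dv_g.
\]
The left-hand side tends to $0$ and, because $\kappa_g\geq 0$, both terms on the right are non-negative, so each tends to $0$. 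From $\|\nabla\psi_j\|_{L^2}\to 0$ and $\psi_j\to\psi$ one obtains $\nabla\psi=0$, hence $|\psi|$ is constant on the connected manifold $X_0$; from $\int_{X_0}\kappa_g|\psi_j|^2\,dv_g\to 0$, restricted to any compact subset of the nonempty open set $\{\kappa_g>0\}$ (where $\kappa_g$ is bounded below by a positive constant), one obtains that $\psi$ vanishes there, and a parallel spinor vanishing on a nonempty open subset of a connected manifold vanishes identically. Hence $\ker\dirac=0$, so $\mathrm{ind}\,\dirac^+=0$, and (\ref{indfor:alb:gell}) gives $\prescript{c}{}{\widehat A}(X,g)=0$.

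I expect the main obstacle to be precisely the step that transports the Lichnerowicz identity past the cone tip: on the open manifold $X_0$ one must know that the $L^2$-kernel of $\dirac$ is approximable in the graph norm by compactly supported spinors, so that no boundary term survives as $x\to 0$. This is exactly the content of the essential self-adjointness established in \cite{albin2016index}, and it is available here only because $\kappa_g\geq 0$ on $\mathcal C$ --- the same sign condition that simultaneously gives the curvature term in (\ref{dec}) the favorable sign. A secondary point is that $(X,g)$ has finite volume near $p$, so a nonzero parallel spinor would a priori be $L^2$-admissible; this is why the strict positivity of $\kappa_g$ somewhere is genuinely needed to finish the vanishing.
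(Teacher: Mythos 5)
Your argument is correct and follows essentially the same route as the paper, which deduces the statement from the index formula (\ref{indfor:alb:gell}) together with the vanishing of $\ker\dirac$ proved in Corollary \ref{vanis} (integration by parts at the cone tip being justified by the essential self-adjointness of Theorem \ref{albin:mellin}). Your graph-norm approximation by compactly supported spinors is just a slightly different, equally standard way of exploiting that same self-adjointness, so there is no substantive difference.
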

	
	This result provides a {geometric} obstruction to the existence of metrics with positive scalar curvature. From our perspective, it should be viewed as an intermediate step towards an obstruction which is more topological in nature. This involves adapting the classical notion of $K$-area, originally due to Gromov \cite{gromov1996positive}, to the conical setting; see Section \ref{kar} below. The resulting invariant,  still called the $K$-area of $(X,g)$  and denoted  $K_{\rm area}(X,g)$, in principle yields a geometric invariant of the conical manifold $(X,g)$. Nevertheless, the fact that it is finite or infinite is a `'topological'' property of the conical manifold. To explain this latter point, 
	recall
	that it is natural
	to declare that the morphisms in the category of conical manifolds are diffeomorphims $\Psi:X_0\to X_0$ of {\em bounded distortion}, so that $g$ and $\Psi^*g$ are in the same quasi-isometry class.
	It turns out that having infinite $K$-area is an invariant property under such diffeomorphisms. From this viewpoint, 
	our aim here is to obstruct positive scalar curvature metrics (possibly with a mean convex boundary) in a given quasi-isometry class.

	\begin{theorem}\label{wh}
		If $X^{2k}$ is a conical spin manifold with infinite $K$-area then it does not carry a conical metric with (strictly) positive scalar curvature (in the given quasi-isometry class). Also, the same result
		holds true in case $\dim X$ is odd if we assume instead that the $K$-area
		of $X\times \mathbb T$, the product of $X$ with a circle, is infinite.
	\end{theorem}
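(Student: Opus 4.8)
The plan is to transplant Gromov's twisted-spinor obstruction \cite{gromov1996positive} to the conical setting, using the conical index formula (\ref{indfor:alb:gell}) of Albin and Gell-Redman in place of the Atiyah--Singer index theorem. Suppose, for contradiction, that $X^{2k}$ carries a conical metric $g'$ in the prescribed quasi-isometry class with $\kappa_{g'}>0$. Since having infinite $K$-area is invariant under bounded-distortion diffeomorphisms, $\ka(X,g')=\infty$ as well, so after renaming we may assume that $(X,g)$ is a conical spin manifold with $\kappa_g>0$ everywhere and $\ka(X,g)=\infty$. Write $\mathcal C$ for the conical collar of $p$ and set $K:=\overline{X_0\setminus\mathcal C}$, which is compact in $X_0$, so that $\kappa_g\geq\kappa_1>0$ on $K$. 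By the definition of the $K$-area in the conical category (Section \ref{kar}), for every $\varepsilon>0$ there is a Hermitian bundle with unitary connection $(E,\nabla^E)$ over $X_0$ which is trivial with product connection on $\mathcal C$ and whose curvature $R^E$ --- hence supported in $K$ --- satisfies $\|R^E\|_g<\varepsilon$, while the associated $E$-twisted Dirac operator $\dirac_E$ has $\mathrm{ind}\,\dirac_E^+\neq0$ (this is how infinite $K$-area manifests through the index formula).

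The first thing to verify is that the analytic package of \cite{albin2016index} carries over to $\dirac_E$. Because $E$ is flat near $p$, twisting alters the indicial family only by tensoring it with $\mathbb C^{\mathrm{rank}\,E}$; hence the geometric Witt assumption (\ref{geo:witt}), which holds since $\kappa_g\geq0$ near $p$, still holds for $\dirac_E$, and Theorem \ref{albin:mellin} applies: $\dirac_E$ is essentially self-adjoint with a Fredholm self-adjoint realization. Moreover, since $\mathrm{ch}(E)\equiv\mathrm{rank}\,E$ on a neighbourhood of $L$, the transgression and eta contributions in the twisted analogue of (\ref{ahat:con}) are just $\mathrm{rank}\,E$ times the untwisted ones, so the twisted form of (\ref{indfor:alb:gell}) reads
\[
\mathrm{ind}\,\dirac_E^+=\mathrm{rank}(E)\cdot\prescript{c}{}{\widehat A}(X,g)+\int_{X_0}\widehat A(TX_0)\wedge\bigl(\mathrm{ch}(E)-\mathrm{rank}\,E\bigr),
\]
where the integrand is compactly supported in $K$ because $E$ is flat near the cone (so the positive-degree Chern--Weil forms vanish there), and the first term vanishes by Theorem \ref{albin:top}; thus $\mathrm{ind}\,\dirac_E^+$ is exactly the relative Chern--Weil number that the $K$-area controls.

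Now the Bochner step. The twisted Weitzenb\"ock identity reads $\dirac_E^{\,2}=\nabla^*\nabla+\tfrac14\kappa_g+\mathfrak R_E$, where $\mathfrak R_E$ is the bundle endomorphism obtained from $R^E$ by Clifford multiplication, so that $|\mathfrak R_E|\leq c_n|R^E|$ pointwise and $\mathfrak R_E$ is supported in $K$. Fix $\varepsilon<\kappa_1/(4c_n)$. For $\psi$ in the $L^2$-kernel of $\dirac_E$, integration by parts --- legitimate by essential self-adjointness, so with no boundary term at the cone --- gives
\[
0=\langle\dirac_E^{\,2}\psi,\psi\rangle_{L^2}=\|\nabla\psi\|_{L^2}^2+\tfrac14\!\int_{X_0\setminus K}\!\kappa_g|\psi|^2+\int_K\bigl(\tfrac14\kappa_g|\psi|^2+\langle\mathfrak R_E\psi,\psi\rangle\bigr),
\]
a sum of three nonnegative terms, the last because $\tfrac14\kappa_1-c_n\varepsilon>0$ on $K$. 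Hence $\psi$ is parallel and vanishes on the nonempty open set $X_0\setminus K$, so by connectedness of $X_0$ we get $\psi\equiv0$. Therefore $\mathrm{ind}\,\dirac_E^+=0$, contradicting the choice of $E$. This proves the statement when $\partial X=\emptyset$; if $\partial X\neq\emptyset$ one imposes mean convexity together with the corresponding (APS-type) boundary condition on $\dirac_E$, which keeps the boundary term in the integration by parts nonnegative, and the argument is unchanged.

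Finally, for odd $\dim X=2k+1$, a conical positive scalar curvature metric on $X$ gives, after taking the Riemannian product with a flat circle, such a metric on $X\times\mathbb T$, whose singular stratum is the circle $\{p\}\times\mathbb T$. The index theory of \cite{albin2016index} and the notion of $K$-area both extend to this edge situation, and the preceding argument applies to $X\times\mathbb T$ (with $\{p\}\times\mathbb T$ playing the role of $Y=\{p\}$); infinite $K$-area of $X\times\mathbb T$ then yields the contradiction. The main obstacle is not the Weitzenb\"ock estimate, which is routine, but the bookkeeping in the second paragraph: confirming that bundles flat near the cone point leave intact the whole apparatus of \cite{albin2016index} --- essential self-adjointness, the Fredholm property, and the precise boundary and eta contributions to the twisted index --- and checking that the $K$-area of Section \ref{kar} indeed furnishes curvature-small, index-nonzero bundles that may be taken flat near $p$.
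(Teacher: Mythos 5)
There is a genuine gap at the very first step of your argument: you assert that infinite $K$-area furnishes, for every $\varepsilon>0$, a small-curvature bundle $E$, flat near the cone, with $\mathrm{ind}\,\dirac_E^+\neq 0$, ``this is how infinite $K$-area manifests through the index formula.'' That is not what the definition gives. The $K$-area of Section \ref{kar} (via Proposition \ref{grom}) only supplies $\mathcal C$-admissible bundles that are \emph{homologically non-trivial} --- at least one nonzero Chern number --- with $\|R^\nabla\|_g\leq\epsilon$. Homological non-triviality does not imply that the particular characteristic number $\int_{X_0}\widehat A(TX_0)\wedge\widehat{\mathrm{ch}}(\Ec)$ is nonzero: this is one specific rational combination of Chern numbers mixed with Pontryagin terms of $g$, and it can perfectly well vanish for a bundle whose nonzero Chern number sits elsewhere. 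Deducing a contradiction from ``some Chern number is nonzero'' is precisely the nontrivial part of Gromov's argument, and it is missing from your proof.

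What the paper does (following \cite{gromov1996positive}) is: after the Bochner step forces $\mathrm{ind}\,\dirac_\Ec^+=\int_{X_0}\widehat A(TX_0)\wedge\widehat{\mathrm{ch}}(\Ec)=0$, it applies the same vanishing argument to the whole family of Adams-operation bundles $\Psi_{\mu_{(\nu)}}\Ec=\Psi_{\mu_1}\Ec\otimes\cdots\otimes\Psi_{\mu_\nu}\Ec$, which remain $\mathcal C$-admissible with curvature still controlled by (\ref{oper:cons}) as long as $\nu\leq k$ is bounded. The resulting indices form a polynomial $\mathsf Q(\mu_{(k)})$ in the integers $\mu_1,\dots,\mu_k$ whose identical vanishing kills its top-degree coefficients, namely the Chern character numbers $\int\mathrm{ch}_{i_1}(\Ec)\wedge\cdots\wedge\mathrm{ch}_{i_k}(\Ec)$; since the Chern numbers are rational combinations of these, $\Ec$ is homologically trivial, contradicting the choice supplied by infinite $K$-area. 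Everything else in your write-up --- the preservation of the Witt condition and essential self-adjointness under twisting by a bundle flat near $p$, the rearrangement of the twisted index formula using $\prescript{c}{}{\widehat A}(X,g)=0$, the localized Weitzenb\"ock vanishing, and the reduction of the odd-dimensional case to the edge manifold $X\times\mathbb T$ --- matches the paper's proof and is fine; but without the Adams operations step (or an equivalent device converting index-vanishing for a family of twists into vanishing of all Chern numbers) the contradiction is not reached. Alternatively, your argument would prove an obstruction for a \emph{different} invariant, defined by requiring bundles with nonzero twisted $\widehat A$-index, which is not the $K$-area as defined in Section \ref{kar}.
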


	Theorem \ref{wh} must be assessed from the perspective that, as explained in \cite[Section 5]{gromov1996positive}, the Dirac machinery may be applied to show that, in the smooth category,  infinite $K$-area obstructs metrics with positive scalar curvature in the spin setting. This remains true 
	for metrics 
	with positive scalar curvature in the interior and non-negative mean curvature along the boundary, as depicted in the recent preprint by B\"ar and Hanke \cite[Theorem 19]{baer2020boundary}. Alternatively, this latter result may be retrieved  as a consequence of Gromov's theorem in the closed case mentioned above combined with a doubling construction which goes back to Gromov and Lawson \cite[Theorem 5.7]{gromov1980spin} and Almeida \cite[Theorem 1.1]{almeida1985minimal}; this approach is emphasized in \cite[Section 1.1]{gromov2018metric} and discussed in  \cite[Remark 35]{baer2020boundary}.
	In our setting, both approaches may be adapted to yield a proof of the corresponding obstruction in the presence of a boundary.

	\begin{theorem}\label{wh2}
		If  a spin conical manifold $X^{2k}$ with boundary  has  infinite $K$-area then it does not carry a metric with (strictly) positive scalar curvature and mean convex boundary (in the given quasi-isometry  class). 
		Also, the same result
		holds true in case $\dim X$ is odd if we assume instead that the $K$-area
		of $X\times  \mathbb T$ is infinite.
	\end{theorem}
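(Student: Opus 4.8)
We argue by contradiction, reducing to the boundaryless situation of Theorem~\ref{wh} via a doubling, and comment on the alternative boundary-value approach at the end. Suppose $X^{2k}$ carries a conical metric $g$, in the prescribed quasi-isometry class, with $\kappa_g>0$ on all of $X_0$ and with mean convex boundary, $H_{\partial X}\ge 0$. Since the conical point $p$ is interior, $g$ is smooth near the compact hypersurface $\partial X$, so the classical bending technique of Gromov--Lawson \cite[Theorem~5.7]{gromov1980spin} and Almeida \cite[Theorem~1.1]{almeida1985minimal} applies: using $H_{\partial X}\ge 0$ one replaces $g$ in a collar $[0,\varepsilon)\times\partial X$ by a metric $\widetilde g$ that equals the product $du^2+g_{\partial X}$ in a smaller collar, keeps $\kappa_{\widetilde g}>0$, and differs from $g$ by a perturbation supported away from $\mathcal C$ and $C^0$-small; thus $\widetilde g$ is again a conical metric of bounded distortion from $g$.

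Form the double $DX^{2k}=X\cup_{\partial X}X$. Since $\widetilde g$ is a product near $\partial X$, its reflection $D\widetilde g$ extends smoothly across the gluing hypersurface and defines a conical metric on $(DX)_0$, now with two isolated conical singularities. The index theory of \cite{albin2016index} is formulated for a general singular stratum, and the constructions of the previous sections --- notably the $K$-area of Section~\ref{kar} and the obstruction in Theorem~\ref{wh} --- extend verbatim to finitely many conical points. Moreover $DX$ is spin (the reflection carries the spin structure along), of even dimension, and $\kappa_{D\widetilde g}>0$ everywhere, so in particular the geometric Witt assumption holds on the conical regions. Hence $(DX,D\widetilde g)$ falls under the hypotheses of Theorem~\ref{wh} except possibly for the $K$-area assumption, which we now verify.

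Showing that $DX$ has infinite $K$-area is a topological matter, independent of the metric. Let $\{(E_\varepsilon,\nabla_\varepsilon)\}$ be bundles with connections on $X$, trivialized near $\partial X$ (as in the relative formulation appropriate when $\partial X\ne\emptyset$) and with $\|R^{\nabla_\varepsilon}\|\to 0$ as $\varepsilon\to 0$, witnessing $K_{\mathrm{area}}(X)=\infty$. Glue $E_\varepsilon$ over one copy of $X$ to the trivial bundle of equal rank over the other, using the given trivialization along $\partial X$; this yields $\widetilde E_\varepsilon\to DX$ with $\|R^{\widetilde\nabla_\varepsilon}\|\to 0$ and with $\widetilde E_\varepsilon$ restricting to $(E_\varepsilon,\nabla_\varepsilon)$ on the first copy and to a trivial bundle (trivial also near the second cone point) on the second. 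Applying the twisted form of the index formula \eqref{indfor:alb:gell} on $(DX,D\widetilde g)$ to $\dirac^+$ coupled to $\widetilde E_\varepsilon$ and to the trivial bundle, and subtracting, one finds that the resulting index difference --- which vanishes by the Lichnerowicz argument \eqref{dec}, since $\kappa_{D\widetilde g}>0$ --- equals the reduced conical $\widehat A$-invariant of $(X,E_\varepsilon)$ relative to $\partial X$, because over the trivially twisted copy (and at its cone point) the contributions cancel those of the trivial bundle. By the definition of infinite $K$-area this reduced invariant is nonzero for small $\varepsilon$, a contradiction. Hence $DX$ has infinite $K$-area, and Theorem~\ref{wh} applied to $(DX,D\widetilde g)$ gives the claim. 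When $\dim X$ is odd one runs the same argument with $X$ replaced by $X\times\mathbb T$, which inherits from $X$ a conical metric of positive scalar curvature and mean convex boundary, noting $D(X\times\mathbb T)=(DX)\times\mathbb T$ is even-dimensional and invoking the hypothesis $K_{\mathrm{area}}(X\times\mathbb T)=\infty$.

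The main obstacle is the third step: one must ensure that the almost-flat bundles can be taken trivial in a collar of $\partial X$ without harming the curvature decay or the index-theoretic nonvanishing, and that the conical index formula \eqref{indfor:alb:gell} on the double --- transgression and $\eta$ terms included --- isolates precisely the relative quantity detected by infinite $K$-area of $X$; this mirrors the closed case in \cite[Section~5]{gromov1996positive} and the proof of Theorem~\ref{wh}, with the passage from one to two cone points harmless thanks to the generality of \cite{albin2016index}. Alternatively, one avoids the doubling and works directly on $(X,\widetilde g)$: couple $\dirac$ to $E_\varepsilon$ and impose the boundary condition of B\"ar--Hanke \cite[Theorem~19]{baer2020boundary}; the Weitzenb\"ock identity \eqref{dec} makes the curvature term of the twisted Dirac Laplacian positive once $\varepsilon$ is small, while $H_{\partial X}\ge 0$ gives the boundary integral the right sign, forcing the pertinent index to vanish and contradicting the nontriviality of the family.
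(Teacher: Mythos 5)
Your overall route is the paper's: deform near $\partial X$ \`a la Gromov--Lawson/Almeida using mean convexity, pass to the double $\mathsf DX=X\cup_{\partial X}-X$ with a conical PSC metric in the same quasi-isometry class, and invoke Theorem \ref{wh}; your closing remark about working directly on the bordered manifold with APS-type boundary conditions is likewise the paper's Remark on the B\"ar--Hanke alternative. However, the one step where you depart from this template --- the verification that $\mathsf DX$ has infinite $K$-area --- contains a genuine gap. Having glued $(\Ec_\varepsilon,\nabla_\varepsilon)$ to the trivial flat bundle over the second copy, all you need is that the resulting bundles are homologically non-trivial with the same curvature bound, and this is immediate from the definitions: admissible bundles in the bordered/conical $K$-area are by definition trivial and flat in a collar of $\partial X$ (and in $\mathcal C$), so their Chern numbers are integrals of characteristic forms compactly supported in the interior of $X$, hence unchanged under the extension; a nonzero Chern number of $\Ec_\varepsilon$ is a nonzero Chern number of $\widetilde\Ec_\varepsilon$. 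Instead you argue via the twisted index formula \eqref{indfor:alb:gell} and assert that ``by the definition of infinite $K$-area'' the reduced invariant $\int_X\widehat A(TX_0)\wedge\widehat{\rm ch}(\Ec_\varepsilon)$ is nonzero for small $\varepsilon$. That implication is false: infinite $K$-area only guarantees some nonzero \emph{Chern number}, and passing from that to a nonvanishing twisted index is exactly what requires the Adams-operations argument applied to the whole family $\Psi_{\mu_{(\nu)}}\Ec_\varepsilon$ in the proof of Theorem \ref{wh}; a single bundle can perfectly well have $\int\widehat A\wedge\widehat{\rm ch}=0$ while being homologically non-trivial. The logic is also structurally off: you derive a ``contradiction'' inside the lemma that was supposed to establish $K_{\rm area}(\mathsf DX)=\infty$, and then conclude both the lemma and the theorem from it; as written the step proves neither.

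Two smaller points. First, your deformation step asserts that with only $H\geq 0$ one can make the metric an exact product near $\partial X$ while keeping $\kappa>0$; as in Appendix \ref{bend}, one should first perturb (e.g.\ by a conformal-type factor on the collar slices) to make $H>0$, and the classical construction then bends the double along an $\varepsilon$-equidistant hypersurface in $X\times\mathbb R$ rather than producing a product collar --- the net effect (a smooth PSC double, $C^0$-small change supported near $\partial X$, same quasi-isometry class) is what matters, and your version can be repaired to deliver it. Second, your observation that the double has two conical points and that the index theory of \cite{albin2016index} and the $K$-area formalism extend to finitely many cone points is correct and worth making explicit, since the paper leaves it implicit. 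With the $K$-area transfer replaced by the direct Chern-number argument above, your proof coincides with the paper's.
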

	
	Examples of conical manifolds to which Theorems \ref{wh} and \ref{wh2} apply are discussed in Example \ref{exam:karea} below.

	This note is organized as follows. Theorem \ref{exist:quasi:pos} is established in Section \ref{exist:scal}	and its proof  relies on the 
	solution of the Yamabe problem in conical spaces \cite{akutagawa2003yamabe,akutagawa2014yamabe},
	as well as on the 
	so-called cone elliptic theory  \cite{schulze1998boundary,schrohe1999ellipticity,schrohe2005resolvent}, which is employed to study the mapping properties of a certain linearized operator whose principal part is the conical Laplacian (these analytical requisites are reviewed in Appendix \ref{con:mapp:p}). 
	The  proofs of Theorems \ref{wh} and \ref{wh2} are presented in Section \ref{demon}. This uses the 
	concept of $K$-area, which  is reviewed in Section \ref{kar}, 
	as well as the natural extensions to the conical setting of classical twisted index formulas presented in Section \ref{con:ind:for:s}. 
	This latter material is  based on the `'incomplete edge'' index theory developed in \cite{albin2016index}.
	Another key ingredient is 
	the rounding construction due to Gromov-Lawson and Almeida, which is discussed in Appendix \ref{bend}. 
	
	\vspace{0.3cm}
	
	\noindent
	{\bf Acknowledgments.} I thank S. Almaraz and C.T. Cruz for conversations. I also thank an anonymous referee for carefully reading the text and thus substantially contributing to its final presentation.
	%	\vspace{0.3cm} 	
	
	\section{An existence result}\label{exist:scal}
	
	Here we prove Theorem \ref{exist:quasi:pos}, 
	which provides an extension to the conical setting of the existence results 
	appearing in \cite{kazdan1975existence,cruz2019prescribing}. 
	In the following  we always assume that $n\geq 4$.

	If $[g]_c$ is a conformal class of conical metrics we define the corresponding {\em Yamabe invariant} as 
	\begin{equation}\label{yamabe:1}
		Y^c([g]_c)=\inf_{0\neq u\in H^{1,2}(X,g)}\frac{\int_{X_0}\left(\alpha|\nabla_g u|^2+\kappa_gu^2\right)d{\rm vol}_{ g}}{\left(\int_{X_0}u^{2n/(n-2)}d{\rm vol}_{ g}\right)^{\frac{n-2}{n}}}, \quad \alpha=\frac{4(n-1)}{n-2},
	\end{equation}
	where $H^{1,2}(X,g)$ is the standard Sobolev space (i.e. the closure of $C^\infty_{\rm cpt}(X_0)$ with respect to the Sobolev norm).
	Also, if $X$ carries an inner compact boundary $\partial X$ with $\partial X\cap\mathcal C=\emptyset$ then we set instead
	\begin{equation}\label{yamabe:2}
		Y_{\partial}^c([g]_c)=\inf_{0\neq u\in H^{1,2}(X,g)}\frac{\int_{X_0}\left(\alpha|\nabla_g u|^2+\kappa_gu^2\right)d{\rm vol}_{ g}+2\int_{\partial X}H_g u^2 d{\rm vol}_{g|_{\partial X}}}{\left(\int_{X_0}u^{2n/(n-2)}d{\rm vol}_{ g}\right)^{\frac{n-2}{n}}},
	\end{equation}
	where $H_{g}$ is the mean curvature of $\partial X$. In each case we assume that the corresponding  invariant is finite. 
	
	To proceed with the proof, let us assume initially that $\partial X=\emptyset$.
	The first step  is to find a {conformal} conical metric $\mathfrak g=u_0^{{4}/{(n-2)}}g$ with {constant} negative scalar curvature.
	As indicated in (\ref{scal:asym}), the existence of a conformal metric to $g$ with {\em bounded} (not necessarily constant) scalar curvature 
	places   geometric obstructions on the original link metric $h_0$.
	Nevertheless, if 
	$\lambda_{h_0}>0$  
	then
	these obstructions  are met and  
	the	existence of such a  metric $\mathfrak g$ with $\kappa_{\mathfrak g}=-1$ is assured. Indeed, as in \cite[Subsection 4.32]{besse2007einstein}, we may inject a sufficiently large amount of negative scalar curvature around some point in $X_0\backslash \mathcal C$ so as to make the background metric $g$ to satisfy $Y^c([g]_c)<0$. On the other hand, if  $Y_L$ is the Yamabe invariant of the conformal class of the 
	product metric $dt^2+h_0$ on
	the
	cylinder $\mathbb R\times L$, it is known that $\lambda_{h_0}>0$ implies $Y_L>0$ \cite[Corollary 2.3]{akutagawa2014yamabe}.  
	Thus,  $Y^c([g]_c)<Y_L$. With this ``Aubin-type'' inequality at hand, the discussion surrounding \cite[Theorem 2.5]{akutagawa2014yamabe} assures the existence of a conformal conical metric $\mathfrak g$ as above; see also \cite[Theorem A (a)]{akutagawa2003yamabe}, where the connection with the cylindrical representation of the conical region in Remark \ref{conical:type} is explored.
	
	\begin{remark}\label{vert:jeffres}
		If the background metric satisfies $\kappa_g<0$ everywhere, existence results for the conical conformal metric $\mathfrak g$ as above have been obtained in \cite{bahuaud2014yamabe,bahuaud2019long}, by means of the long-term convergence of the corresponding normalized Yamabe flow, and in \cite[Theorem 1]{jeffres2010conformal}, where a  barrier construction is used. 
	\end{remark}

	We next try to implement the perturbation argument in \cite{kazdan1975existence} in this conical setting. 
	We consider the map 
	\begin{equation}\label{mapping:Au}
		A(u)=-u^{-a}\left(\alpha\Delta_{\mathfrak g}u+u\right), \quad a=\frac{n+2}{n-2}.
	\end{equation}
	Due to the conformal transformation properties of the scalar curvature, $A(u)$ is the scalar curvature of the conformal metric $u^{4/(n-2)}\mathfrak g$.
	If $\bf 1$ denotes the function identically equal to $1$ then $A({\bf 1})=-{\bf 1}=\kappa_{\mathfrak g}$ and our aim now is to solve the equation $A(u)=f$ for $f$ close to $-{\bf 1}$.
	Clearly, this involves studying the mapping properties of the linearization of $A$ at $u={\bf 1}$,
	\begin{equation}\label{oper:brac}
		\dot A_{\bf 1}=-\alpha\Delta_{\mathfrak g}+b, \quad b={a-1}=\frac{4}{n-2}, 
	\end{equation}
	which we do in a scale of Sobolev spaces which takes into account the conical singularity. 
	
	Given  $\gamma\in \mathbb R$ and integers $k\geq 0$ and $p\geq 1$, we define $\mathcal H_\gamma^{k,p}(X,g)$ to be the space of all distributions $u\in L^p_{\rm loc}(X_0)$, $X_0=X\backslash \{p\}$,   such that:
	\begin{itemize}
		\item for any cutoff function $\varphi$ with $\varphi\equiv 1$ near $p$ and $\varphi\equiv 0$ outside $\mathcal C$, we have that $(1-\varphi)u$ lies in the standard Sobolev space $H^{k,p}(X_0,g)$;
		\item there holds
		\[
		x^{\frac{n}{2}-\gamma}\mathsf D^j\partial_z^\alpha (\varphi u)(x,z)\in L^p(d_+xd{\rm vol}_h), \quad j+|\alpha|\leq k,\quad d_+x=x^{-1}dx.
		\]
		Here, $\mathsf D=x\partial_x$ is the Fuchs operator. 
	\end{itemize}
	Using duality and interpolation, we may define $\mathcal H_\gamma^{s,p}(X,g)$ for any $s\in\mathbb R$. As usual, $\mathcal H_\gamma^{s,p}(X,g)$ is naturally a Banach space which is Hilbert for $p=2$.  These are the weighted Mellin-Sobolev spaces considered in \cite{schrohe1999ellipticity}. 
	We set $\mathcal H_\gamma^{\infty,p}(X,g)=\cap_{s}\mathcal H_\gamma^{s,p}(X,g)$.
	For further reference we recall the relevant embedding theorem; see \cite[Remark 2.2]{coriasco2007realizations} and 	\cite[Corollary 2.5]{roidos2013cahn}.
	
	\begin{proposition}\label{sob:emb}
		One has a continuous embedding $\mathcal H_{\gamma'}^{s,p}(X,g)\hookrightarrow \mathcal H_\gamma^{s,p}(X,g)$ if $\gamma'\geq\gamma$. Also,
		if $s>n/p$ then any $u\in \mathcal H_\gamma^{s,p}(X,g)$ is continuous in $X_0$ and satisfies $u(x)=O(x^{\gamma-n/2})$ as $x\to 0$.
	\end{proposition}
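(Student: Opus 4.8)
The plan is to reduce both assertions, via the cylindrical substitution $x=e^{-t}$ of Remark \ref{conical:type}, to classical Sobolev theory on a complete manifold of bounded geometry. First I would dispose of the region away from the singularity: by definition the piece $(1-\varphi)u$ lies in the ordinary Sobolev space $H^{s,p}(X_0,g)$, for which monotonicity in the weight is vacuous and, when $s>n/p$, continuity is the classical Sobolev embedding; so the entire content of the statement concerns the collar piece $\varphi u$, supported in $\mathcal C$.

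Next I would introduce the conjugation that trivializes the weight. Setting $v(t,z):=(\varphi u)(e^{-t},z)$, one has $\mathsf D=x\partial_x=-\partial_t$, while $d_+x$ turns into $dt$ and $x^{\frac{n}{2}-\gamma}=e^{t(\gamma-\frac{n}{2})}$; hence the two defining conditions of $\mathcal H^{k,p}_\gamma(X,g)$ near $p$ say precisely that $e^{t(\gamma-\frac{n}{2})}\partial_t^j\partial_z^\alpha v\in L^p(dt\,d{\rm vol}_{h_0})$ for all $j+|\alpha|\le k$. Applying the Leibniz rule to $w:=e^{t(\gamma-\frac{n}{2})}v$ converts this, through a triangular constant-coefficient linear system, into the single statement that $w$ lies in the standard Sobolev space $H^{k,p}$ of the half-cylinder $\widehat{\mathcal C}=[0,+\infty)\times L$ endowed with the product metric $dt^2+h_0$ (extending $w$ by zero to $\mathbb R\times L$ is harmless, since $\varphi$ vanishes near $x=1$). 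Thus $u\mapsto w$ is a topological isomorphism from $\mathcal H^{k,p}_\gamma(X,g)$, localized near $p$, onto $H^{k,p}(\widehat{\mathcal C})$ for integer $k$; and by the very definition of the Mellin--Sobolev scale through duality and interpolation, the same dictionary persists for every $s\in\mathbb R$.

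Granting this dictionary, the first assertion is immediate: the inclusion $\mathcal H^{s,p}_{\gamma'}\hookrightarrow\mathcal H^{s,p}_\gamma$ for $\gamma'\ge\gamma$ corresponds to multiplication by $e^{-t(\gamma'-\gamma)}$, a smooth function with all derivatives bounded on $[0,+\infty)$, which therefore acts continuously on $H^{s,p}(\widehat{\mathcal C})$ --- equivalently, on the $x$-side one just uses $x^{\gamma'-\gamma}\le 1$ on $(0,1)$. For the second assertion, since $(\widehat{\mathcal C},dt^2+h_0)$ is a complete $n$-dimensional manifold of bounded geometry, the (fractional) Sobolev embedding theorem gives $H^{s,p}(\widehat{\mathcal C})\hookrightarrow C^0_b(\widehat{\mathcal C})$ whenever $s>n/p$; hence $w$ is continuous and bounded with $\|w\|_\infty\le C\|u\|_{\mathcal H^{s,p}_\gamma}$, and undoing the substitution via $v=e^{-t(\gamma-\frac{n}{2})}w$ and $t=-\log x$ yields $|u(x,z)|\le C\|u\|_{\mathcal H^{s,p}_\gamma}\,x^{\gamma-\frac{n}{2}}$, i.e. $u(x)=O(x^{\gamma-n/2})$ as $x\to0$; continuity of $u$ on all of $X_0$ then follows by patching this with the ordinary Sobolev embedding applied to $(1-\varphi)u$.

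The step I expect to require the most care --- and the reason \cite{coriasco2007realizations,roidos2013cahn} are invoked rather than reproduced --- is the passage to non-integer $s$: one must know that the abstractly defined space $\mathcal H^{s,p}_\gamma(X,g)$ is genuinely carried by the cylindrical transform onto the Bessel-potential space $H^{s,p}(\widehat{\mathcal C})$, and that the fractional Sobolev embedding into $C^0_b$ is valid on this non-compact manifold of bounded geometry. Granting these standard facts, the argument above goes through verbatim.
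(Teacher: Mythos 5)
Your argument is correct and is essentially the standard one: the paper itself states this proposition without proof, deferring to \cite[Remark 2.2]{coriasco2007realizations} and \cite[Corollary 2.5]{roidos2013cahn}, where the Mellin--Sobolev scale is handled precisely through the cylindrical conjugation $u\mapsto e^{t(\gamma-n/2)}u(e^{-t},\cdot)$ you use, so your proof matches the intended route. (Only a cosmetic overstatement: the localized map is an isomorphism onto its image in $H^{s,p}(\widehat{\mathcal C})$, not onto the whole space, but nothing in the argument uses surjectivity.)
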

	
	It is clear that the operator $\dot A_{\bf 1}$ is elliptic and defines a bounded map
	\begin{equation}\label{lap:cont:sob}
		\dot A_{\bf 1}:\mathcal H_\gamma^{s,p}(X,g)\to \mathcal H_{\gamma-2}^{s-2,p}(X,g) 
	\end{equation} 
	However, a key point in the analysis of an elliptic operator in a conical manifold is that, differently from what happens in the smooth case, invertibility of its principal symbol does not suffice to make sure that a parametrix exists. In particular, it is not clear whether (\ref{lap:cont:sob}) is Fredholm for some value of the weight $\gamma$. It turns out that this Fredholmness property and the associated index are insensitive to $(s,p)$ but depend crucially on $\gamma$ \cite{schrohe1999ellipticity}. Indeed, it follows from general principles that whenever this map is Fredholm for some value of $\gamma$ then it remains Fredholm for all but a discrete set of values of $\gamma$, with the index possibly jumping only when $\gamma$ reaches these exceptional values.  
	We now state a useful result that confirms this expectation for the map (\ref{lap:cont:sob}).
	A proof is included in Appendix \ref{con:mapp:p}.
	
	\begin{theorem}\label{self:adj}
		The map $\dot A_{\bf 1}$ in (\ref{lap:cont:sob}) is Fredholm of index $0$ 
		for
		$\gamma\in((4-n)/2,n/2)$ if $n\geq 5$ and $\gamma\in[0,2)$ if $n=4$.
		Also, the same holds true if we further impose Neumann boundary conditions along a boundary $\partial X\neq \emptyset$ located outside the conical region. 
	\end{theorem}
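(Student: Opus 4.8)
\emph{Overall strategy.} Since (\ref{lap:cont:sob}) is a Fredholm-type statement for a second order elliptic operator on a manifold with one conical end, I would deduce it from the cone (Fuchs-type) elliptic theory of \cite{schrohe1999ellipticity,schulze1998boundary}: the heart of the matter is to determine for which weights $\gamma$ the conormal (indicial) symbol of $\dot A_{\bf 1}=-\alpha\Delta_{\mathfrak g}+b$ is an invertible family, and then to pin down the index by exploiting formal self-adjointness. Away from the cone tip $p$ --- and, when $\partial X\neq\emptyset$, away from $\partial X$ --- $\dot A_{\bf 1}$ is a classical elliptic operator and the Neumann condition along $\partial X$ is a classical elliptic boundary condition, so standard parametrix constructions give the Fredholm estimates in those regions and reduce everything to the behaviour near $p$. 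There, by construction of the Mellin--Sobolev spaces, one works on the model cone $\mathcal C\cong(0,1)\times L$, and the cone calculus asserts that (\ref{lap:cont:sob}) is Fredholm precisely when (i) $\dot A_{\bf 1}$ is interior-elliptic --- immediate, its principal symbol being that of $-\alpha\Delta_{\mathfrak g}$ --- and (ii) its conormal symbol is invertible along the weight line attached to $\gamma$.

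\emph{The conormal symbol and its indicial roots.} Writing the Laplacian of the exact cone $dx^2+x^2h_0$ in Fuchs form (with $d=n-1$),
\[
\Delta_{g_0}=x^{-2}\bigl((x\partial_x)^2+(d-1)(x\partial_x)+\Delta_{h_0}\bigr),
\]
one checks, using $g=g_0+h$ with $|h|_{g_0}=O(x^\lambda)$, $\lambda\geq 2$, together with the asymptotics of the Yamabe factor $u_0$, that $\dot A_{\bf 1}$ is a second order cone differential operator whose conormal symbol equals, up to a nonzero constant factor and terms vanishing at $x=0$ (the zeroth order term $b$ and the perturbation $h$ being of lower Fuchs order), the holomorphic operator family $\sigma_M(\lambda)=-\alpha\bigl(\lambda^2+(d-1)\lambda+\Delta_{h_0}\bigr)$ on $L$. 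Diagonalising along the eigenspaces of $-\Delta_{h_0}$, with eigenvalues $0=\mu_0<\mu_1\leq\cdots$, invertibility of $\sigma_M(\lambda)$ fails exactly at the indicial roots $\gamma_k^{\pm}=\tfrac12\bigl(-(d-1)\pm\sqrt{(d-1)^2+4\mu_k}\bigr)$; in particular $\gamma_0^{+}=0$ and $\gamma_0^{-}=2-n$, while $\gamma_k^{+}>0>2-n>\gamma_k^{-}$ for all $k\geq 1$. Hence the open vertical strip $\{\,2-n<\operatorname{Re}\lambda<0\,\}$ is free of indicial roots and carries precisely the two roots $0$ and $2-n$ on its boundary.

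\emph{Weights and index.} By Proposition \ref{sob:emb} an element of $\mathcal H^{s,p}_\gamma$ has size $O(x^{\gamma-n/2})$ near $p$, so the weight line attached to $\gamma$ is $\operatorname{Re}\lambda=\gamma-n/2$. Thus (\ref{lap:cont:sob}) is Fredholm exactly when $\gamma-n/2$ avoids every $\gamma_k^{\pm}$, and by the previous step the maximal such interval containing the $L^2$-weight (namely $\gamma=0$, since $\mathcal H^{0,2}_0=L^2(X,g)$) is $\gamma-n/2\in(2-n,0)$, i.e. $\gamma\in((4-n)/2,n/2)$ --- which for $n\geq 5$ already contains $0$. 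When $n=4$ this interval degenerates to $(0,2)$ and one shows separately that Fredholmness persists at the left endpoint $\gamma=0$: the would-be obstructing asymptotic mode $x^{2-n}\cdot\mathrm{const}$ sitting on the weight line is \emph{not} annihilated by $\dot A_{\bf 1}$, precisely because of the zeroth order term $b>0$, so the endpoint may be adjoined, giving $[0,2)$. For the index, $\dot A_{\bf 1}$ is formally self-adjoint with respect to $d\mathrm{vol}_{\mathfrak g}$ (and to the Neumann realisation when $\partial X\neq\emptyset$), so under the duality $(\mathcal H^{s,p}_\gamma)^{*}\cong\mathcal H^{-s,p'}_{-\gamma}$ the Banach adjoint of (\ref{lap:cont:sob}) is $\dot A_{\bf 1}\colon\mathcal H^{2-s,p'}_{2-\gamma}\to\mathcal H^{-s,p'}_{-\gamma}$, whence $\operatorname{ind}(\dot A_{\bf 1};\gamma)=-\operatorname{ind}(\dot A_{\bf 1};2-\gamma)$. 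Since the admissible interval is connected and symmetric about $\gamma=1$, the weights $\gamma$ and $2-\gamma$ lie in the same component, on which the index is locally constant; hence $\operatorname{ind}=-\operatorname{ind}=0$. (Equivalently, the operator is invertible at the balanced weight $\gamma=1$, being there a positive self-adjoint operator $-\alpha\Delta_{\mathfrak g}+b$ with $\alpha,b>0$.)

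\emph{The main obstacle.} The delicate point is the second step: checking that $\dot A_{\bf 1}$, built from the Yamabe metric $\mathfrak g$ rather than from the exact cone metric, genuinely lies in the cone calculus with the stated conormal symbol --- i.e. that the conformal factor $u_0$ and the $O(x^\lambda)$ perturbation are of lower Fuchs order --- and matching the normalisation of the weight line so that the indicial roots $0$ and $2-n$ fall exactly at the endpoints $n/2$ and $(4-n)/2$. The separate handling of the endpoint $\gamma=0$ in dimension $n=4$ is the remaining subtlety.
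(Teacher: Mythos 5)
Most of your argument runs along the same lines as the paper's Appendix B: you identify the conormal (indicial) symbol of $\dot A_{\bf 1}$, note that the zeroth order term $b$ and the $O(x^\lambda)$ perturbation are of lower Fuchs order, locate the indicial roots $0$ and $2-n$ bounding a root-free strip, and translate this into the weight window $\gamma\in((4-n)/2,n/2)$; this matches Theorem \ref{map:p:lap:cor} and the computation of (\ref{conor:symb}). Your route to index zero is genuinely different and is fine: the paper gets index zero from symmetry of the $\gamma=0$ realization (Remark \ref{symm}) together with essential self-adjointness at the $L^2$ weight (Theorem \ref{map:p:lap}) and then index constancy across the window, whereas you use the duality $(\mathcal H^{s,p}_\gamma)^*\cong\mathcal H^{-s,p'}_{-\gamma}$, formal self-adjointness, and the symmetry of the window about $\gamma=1$ to get ${\rm ind}(\gamma)=-{\rm ind}(2-\gamma)=-{\rm ind}(\gamma)$. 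That buys you index zero on the whole open interval without singling out $\gamma=0$, at the cost of invoking the duality of Mellin--Sobolev spaces; your parenthetical claim that the operator is ``invertible at the balanced weight $\gamma=1$, being there a positive self-adjoint operator'' would itself need justification (positivity requires an integration by parts at the tip whose boundary term is exactly borderline for the weight $\gamma=1$), but it is not needed for the argument. Your treatment of the Neumann case (self-adjointness of the Neumann realization plus the same duality) is acceptable and comparable to the paper's Green's-formula argument.

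The genuine gap is your handling of the endpoint $\gamma=0$ when $n=4$. There an indicial root sits exactly on the weight line (in your normalization the root $2-n=-2$ lies on ${\rm Re}\,\lambda=\gamma-n/2=-2$; in the paper's conjugated picture the root $\zeta=0$ lies on $\Gamma_{-\gamma}=\Gamma_0$), and you propose to rescue Fredholmness by observing that the mode $x^{2-n}$ is ``not annihilated by $\dot A_{\bf 1}$ because of the zeroth order term $b>0$''. This cannot work: as you yourself note one paragraph earlier, $b$ has lower Fuchs order and does not enter the conormal symbol, and in the cone calculus the Fredholm property of the bounded map (\ref{lap:cont:sob}) at a given weight is governed by invertibility of the conormal symbol along that weight line --- a zeroth order perturbation cannot restore closed range when a root lies on the line, since the obstruction is produced by approximate solutions concentrating at the tip on which such terms are negligible. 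Whether the full operator kills a global solution with the borderline asymptotics is a statement about kernel elements, not about Fredholmness. The paper's argument at this endpoint is of a different nature: for $d\geq 3$ the strip governing $D_{\rm max}/D_{\rm min}$ of the $L^2$ ($\gamma=0$) realization is the \emph{open} strip $\Gamma_{-2,0}$, on whose boundary the roots $0$ and $-2$ sit, so $\widetilde\Lambda^{\tau,\pm}_0=\emptyset$, the core operator has a unique closed extension, and symmetry at $\gamma=0$ then yields essential self-adjointness and index zero (Theorem \ref{map:p:lap}, Remark \ref{Fried:d=3}). You would need to replace your endpoint argument by something of this kind (or restrict the $n=4$ statement to the open interval $(0,2)$); as written, that step fails.
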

	
	\begin{corollary}\label{self:adj:cor}
		$\dot A_{\bf 1}$ as above is invertible if $0\leq \gamma<n/2$. 
	\end{corollary}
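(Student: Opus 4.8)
The plan is to upgrade the index-zero Fredholm statement of Theorem~\ref{self:adj} to invertibility by showing that $\dot A_{\bf 1}$ has trivial kernel, exploiting that $-\alpha\Delta_{\mathfrak g}+b$ is a formally positive operator (recall $\alpha>0$ and $b=\frac{4}{n-2}>0$). First I would note that $(4-n)/2\leq 0$ for $n\geq 4$, so the interval $[0,n/2)$ lies inside the range of weights in Theorem~\ref{self:adj} (and equals $[0,2)$ when $n=4$); hence for every $\gamma$ with $0\leq\gamma<n/2$ the map $\dot A_{\bf 1}:\mathcal H_\gamma^{s,p}(X,g)\to\mathcal H_{\gamma-2}^{s-2,p}(X,g)$ of (\ref{lap:cont:sob}) is Fredholm of index $0$, for every admissible $(s,p)$ and also with Neumann conditions imposed on a boundary $\partial X\neq\emptyset$ lying outside the conical region. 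Since the kernel does not depend on $(s,p)$, it suffices to show that every $u\in\mathcal H_\gamma^{2,2}(X,g)$ with $\dot A_{\bf 1}u=0$ vanishes identically.

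The step I expect to be the main obstacle is to produce enough decay of such a $u$ at the conical point $p$ to make an integration by parts legitimate. Here I would invoke cone elliptic regularity to get $u\in\mathcal H_\gamma^{\infty,2}(X,g)\subset C^\infty(X_0)$ and, crucially, a complete asymptotic expansion of $u$ at $p$ whose exponents belong to the set of indicial roots of the leading conical part of $\dot A_{\bf 1}$. The zeroth-order term $b$ does not affect these roots, which are therefore determined by $-\alpha\Delta_{\mathfrak g}$ and, with the model cone written as $dx^2+x^2h_0$, satisfy $\zeta^2+(n-2)\zeta=\mu_k$, where $\mu_k\geq 0$ runs over the eigenvalues of the link Laplacian of $\mathfrak g$. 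Thus every indicial root has $\mathrm{Re}\,\zeta\in\{0\}\cup(0,\infty)\cup(-\infty,-(n-2)]$, so none has $\mathrm{Re}\,\zeta$ in the gap $(-(n-2),0)$; in particular there is no exceptional weight in the open interval $(0,n/2)$ (for $n=4$ the endpoint $\gamma=0$ is itself exceptional, but that case is already covered by Theorem~\ref{self:adj}). Because $\gamma\geq 0$, the exponents appearing in the expansion of $u$ all have $\mathrm{Re}\,\zeta\geq 0$, so isolating the $\zeta=0$ contribution gives $u=c_0(z)+O(x^{\epsilon})$ for some $\epsilon>0$, where $c_0$ is killed by the link Laplacian, hence locally constant; the same estimate holds for the Fuchs derivatives $\mathsf D^j\partial_z^\alpha(u-c_0)$ by Proposition~\ref{sob:emb}. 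Consequently $\nabla_{\mathfrak g}c_0=0$ near $p$, so $u$ is bounded and $|\nabla_{\mathfrak g}u|_{\mathfrak g}=O(x^{\epsilon-1})$; since $d{\rm vol}_{\mathfrak g}$ is comparable near $p$ to $x^{n-1}dx$ times a fixed measure on $L$ and $n\geq 4$, it follows that $u\in H^{1,2}(X,\mathfrak g)$.

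Finally I would integrate by parts on $X_\delta:=X_0\cap\{x\geq\delta\}$. Using $\dot A_{\bf 1}u=0$ and Green's identity for $-\alpha\Delta_{\mathfrak g}$,
\[
0=\int_{X_\delta}u\,\dot A_{\bf 1}u\,d{\rm vol}_{\mathfrak g}=\int_{X_\delta}\bigl(\alpha|\nabla_{\mathfrak g}u|^2+b\,u^2\bigr)\,d{\rm vol}_{\mathfrak g}-\alpha\int_{\{x=\delta\}}u\,\partial_\nu u\,d\sigma,
\]
where the contribution of $\partial X$, if present, drops out by the Neumann condition. As $d\sigma=O(\delta^{n-1})$ and $u\,\partial_\nu u=O(\delta^{\epsilon-1})$, the last integral is $O(\delta^{n-2+\epsilon})\to 0$ when $\delta\to 0$, while the interior integrals converge since $u\in H^{1,2}$; letting $\delta\to 0$ yields $\int_{X_0}\bigl(\alpha|\nabla_{\mathfrak g}u|^2+b\,u^2\bigr)\,d{\rm vol}_{\mathfrak g}=0$, and $\alpha,b>0$ force $u\equiv 0$. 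Hence $\dot A_{\bf 1}$ is injective and, being Fredholm of index $0$, invertible for $0\leq\gamma<n/2$, as claimed. The one place that really requires the cone calculus rather than a soft argument is the decay step in the second paragraph, namely controlling the behavior of $u$ and of $\nabla_{\mathfrak g}u$ at $p$ through the absence of indicial roots — equivalently, of exceptional weights — in the relevant window; everything else is the standard positivity argument.
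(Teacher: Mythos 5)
Your proposal is correct and follows essentially the paper's own route: the index-zero Fredholmness from Theorem \ref{self:adj} reduces the claim to injectivity, which is then obtained by integration by parts and the positivity of $\alpha$ and $b=\frac{4}{n-2}$. The only difference is bookkeeping: where you justify the absence of a contribution from the singularity by explicit indicial-root asymptotics and a cutoff at $x=\delta$, the paper first reduces to $\gamma=0$ via Proposition \ref{sob:emb} and then invokes Theorem \ref{map:p:lap} and Remark \ref{Fried:d=3} for the same fact.
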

	
	\begin{proof}
		It suffices to show that $\dot A_{\bf 1}$ is injective. By Proposition \ref{sob:emb}, we may assume that $\gamma=0$. Take $w\in {\rm ker}\,\dot A_{\bf 1}\cap\mathcal H^{s,p}_0(X,g)$, so that  $\alpha\Delta_{\mathfrak g}w=bw$. 
		By Theorem \ref{map:p:lap} and Remark \ref{Fried:d=3}, integration by parts does not yield a contribution coming from the singularity. 
		We thus obtain 
		\[
		-\alpha\int_X|\nabla_{\mathfrak g}w|^2=b\int_X|w|^2,
		\] 
		a contradiction unless $w=0$.
	\end{proof}

	With these results at hand, we now turn to the adaptation of the perturbation argument in \cite{kazdan1975existence}. 
	First note that any bounded measurable  function lies in $\mathcal H^{0,p}_\gamma(X,g)$ because we are assuming that $\gamma<n/2$. In particular, ${\bf 1}\in \mathcal H^{0,p}_\gamma(X,g)$.  By the Inverse Function Theorem, there exists  $\varepsilon >0$ such that for any  $f\in \mathcal H_\gamma^{0,p}(X,g)$ with $\|f+{\bf 1}\|_{\mathcal H_\gamma^{0,p}}<\varepsilon$ there exists $u\in \mathcal H_\gamma^{2,p}(X,g)$ such that $A(u)=f$. 
	Moreover, by shrinking $\varepsilon$ we may assume that $\|u-{\bf 1}\|_{\mathcal H_\gamma^{2,p}}$ is correspondingly small.
	If $f$ is smooth and $p$ is chosen large enough then (interior) elliptic regularity together with Proposition \ref{sob:emb} imply, via a standard bootstrap argument, that $u\in C^\infty(X_0)\cap \mathcal H_\gamma^{\infty,p}(X,g)$. Hence, $u>0$ and $\widetilde g=u^{{4}/{(n-2)}}\mathfrak g$ is a conformal metric with $\kappa_{\widetilde g}=f$. Proposition \ref{sob:emb} implies that $u=1+O(x^{\gamma-n/2})=O(x^{-(n-2)/2+\epsilon})$ if $\gamma\leq 1+\epsilon$. Thus, we may apply \cite[Proposition 3.1]{akutagawa2014yamabe} to conclude that $u=c+o(1)$, $c>0$, so that  $\widetilde g$ is a conical metric.

	We now appeal to the next result, whose proof is a straightforward adaptation of the argument leading to \cite[Theorem 2.1]{kazdan1975existence}. Note that, by the previous reasoning, we are assuming that $0\leq \gamma\leq 1+\epsilon$.
	
	\begin{proposition}
		Let $\phi,\phi'\in C^0(X_0)\cap L^\infty(X_0)\hookrightarrow \mathcal H_\gamma^{0,p}(X,g)$. Then, $\min \phi <\phi' <\max \phi$ if and only if for any $\varepsilon>0$ there exists a diffeomormism $\Psi:X_0\to X_0$ of bounded distortion (in particular, preserving the quasi-isometry class of conical metrics)
		such that $\|\phi\circ \Psi-\phi'\|_{\mathcal H_\gamma^{0,p}(X,g)}<\varepsilon$.
	\end{proposition}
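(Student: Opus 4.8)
The plan is to adapt, essentially verbatim, the rearrangement construction from the proof of \cite[Theorem 2.1]{kazdan1975existence}; the two new issues are the behaviour near the conical singularity and the requirement that the diffeomorphism produced has bounded distortion.

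Assume first that $\min\phi<\phi'<\max\phi$ and fix $\varepsilon>0$; I shall construct the required $\Psi$. The first step is to discard the conical end. Since $\gamma<n/2$, any $u$ with $\|u\|_{L^\infty}\le K$ satisfies $\|u\|_{\mathcal H^{0,p}_\gamma((0,\delta)\times L)}\le CK\,\delta^{n/2-\gamma}$, because near $p$ this norm equals a constant times $\left(\int_0^1 x^{(n/2-\gamma)p-1}\int_L|u|^p\,d{\rm vol}_{h_0}\,dx\right)^{1/p}$ and $(n/2-\gamma)p>0$; thus it tends to $0$ as $\delta\to 0$. I fix $\delta\in(0,1)$ so small that this bound is $<\varepsilon/2$ for $K=\|\phi\|_\infty+\|\phi'\|_\infty$, and write $X_\delta:=X_0\setminus((0,\delta)\times L)$ for the compact core, on which $\mathcal H^{0,p}_\gamma$ coincides with $L^p(\cdot,g)$. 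As $\|\phi\circ\Psi\|_\infty=\|\phi\|_\infty$ for every diffeomorphism $\Psi$, it then suffices to produce a bounded-distortion $\Psi$, equal to the identity on $(0,\delta)\times L$ (and on a collar of $\partial X$ when $\partial X\neq\emptyset$), with $\|\phi\circ\Psi-\phi'\|_{L^p(X_\delta,g)}$ as small as desired.

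On the core I run the classical argument. Since $\phi'$ is continuous on the compact set $X_\delta$ while $\min\phi<\phi'<\max\phi$ on $X_0$, one has $\inf_{X_0}\phi<\min_{X_\delta}\phi'\le\max_{X_\delta}\phi'<\sup_{X_0}\phi$ with strict gaps, so, using that $X_{\delta'}:=X_0\setminus((0,\delta')\times L)$ increases to $X_0$ as $\delta'\downarrow 0$, I fix $\delta'\ll\delta$ with $\inf_{X_{\delta'}}\phi<\min_{X_\delta}\phi'$ and $\sup_{X_{\delta'}}\phi>\max_{X_\delta}\phi'$. Next I approximate $\phi'|_{X_\delta}$ in $L^p$ by a function that is constant equal to some $c_i\in(\inf_{X_{\delta'}}\phi,\sup_{X_{\delta'}}\phi)$ on each piece of a sufficiently fine partition $X_\delta=\bigcup_i\Omega_i$; by continuity and connectedness $\phi$ attains each $c_i$ inside $X_{\delta'}$, so there is a ball $B_i\subset X_{\delta'}$ of arbitrarily small $g$-volume on which $|\phi-c_i|$ is arbitrarily small. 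As in \cite{kazdan1975existence}, I build a diffeomorphism of the compact manifold-with-boundary $X_{\delta'}$, equal to the identity near $\partial X_{\delta'}$, which sends a fraction of the $g$-volume of each $\Omega_i$ arbitrarily close to $1$ into the corresponding $B_i$, and extend it by the identity over $(0,\delta')\times L$ to obtain $\Psi$. On the part of $\Omega_i$ landing in $B_i$ one has $|\phi\circ\Psi-\phi'|\le|\phi\circ\Psi-c_i|+|c_i-\phi'|$, which is small; on the complementary set, of small $g$-volume, it is at most $2K$; summing over $i$ and absorbing the step-function error makes $\|\phi\circ\Psi-\phi'\|_{L^p(X_\delta,g)}$ as small as required. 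Since $\Psi$ is the identity near $p$ (and near $\partial X$) and restricts to a boundary-fixing diffeomorphism of the compact smooth manifold $X_{\delta'}$, it is bi-Lipschitz there, so $g$ and $\Psi^*g$ are uniformly equivalent on all of $X_0$ and $\Psi$ has bounded distortion. For the converse, if diffeomorphisms $\Psi_j$ of bounded distortion satisfy $\phi\circ\Psi_j\to\phi'$ in $\mathcal H^{0,p}_\gamma$, then a subsequence converges a.e.\ on $X_0$; since $(\phi\circ\Psi_j)(X_0)=\phi(X_0)$, this forces $\phi'(X_0)\subset[\inf_{X_0}\phi,\sup_{X_0}\phi]$, the strict inequalities then following by the same refinement as in \cite[Theorem 2.1]{kazdan1975existence}.

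The step I expect to be the main obstacle is the verification of bounded distortion: the Kazdan--Warner rearrangement is built from diffeomorphisms with arbitrarily large dilation, and one must ensure these stay inside the prescribed quasi-isometry class of conical metrics. This is precisely why the whole construction is confined to the compact core $X_{\delta'}$ and extended by the identity over the conical end, so that the (possibly enormous, but finite) bi-Lipschitz constant of $\Psi|_{X_{\delta'}}$ controls the distortion of $\Psi$ on all of $X_0$; the negligibility of the conical tail in the $\mathcal H^{0,p}_\gamma$-norm, guaranteed by $\gamma<n/2$, then means nothing is lost by leaving $\Psi$ trivial there.
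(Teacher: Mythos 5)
Your proposal is correct and follows exactly the route the paper intends: the paper's own ``proof'' is just the remark that the statement is a straightforward adaptation of \cite[Theorem 2.1]{kazdan1975existence}, and your argument is precisely that adaptation, with the two conical issues handled in the natural way --- the tail $(0,\delta)\times L$ is negligible in the $\mathcal H^{0,p}_\gamma$-norm because $\gamma<n/2$, and the Kazdan--Warner rearrangement is confined to a compact core and extended by the identity, which makes the resulting diffeomorphism bi-Lipschitz and hence of bounded distortion. Your deferral of the converse direction (strict inequalities) to the refinement in Kazdan--Warner is at the same level of detail as the paper itself, so nothing essential is missing relative to its proof.
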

	
	Thus, if $\widetilde f$ is bounded, smooth and negative somewhere as in Theorem \ref{exist:quasi:pos}, there exists $K>0$ such that $K\min \widetilde f < -1 <K\max \widetilde f$, and the proposition above applies to assure the existence of $\Psi$ such that $\|K\widetilde f\circ\Psi+{\bf 1}\|_{\mathcal H_\gamma^{0,p}}<\varepsilon$. By the previous argument with $f=K\widetilde f\circ\Psi$, there exists a conical metric $\widetilde g$ such that $\kappa_{\widetilde g}=K\widetilde f\circ\Psi$. Thus, $g_1=K^{1/2}(\Psi^{-1})^*\widetilde g$
	is the conical metric whose scalar curvature is $\widetilde f$. This completes the proof of Theorem \ref{exist:quasi:pos} as long as $\partial X=\emptyset$.
	
	To carry out the proof in case $\partial X\neq \emptyset$ we observe that, under a conformal deformation  $\mathfrak g=u_0^{4/(n-2)}g$, 
	the mean curvature of $\partial X$ transforms as
	\begin{equation}\label{conf:bd}
		H_{\mathfrak g}=u_0^{-n/{(n-2)}}\left(\frac{2(n-1)}{n-2}\frac{\partial u_0}{\partial \nu}+H_{g}u_0\right),
	\end{equation} 
	where  $\nu$ is the outward unit normal vector field.
	Of course, we may assume that 
	$H_g=0$ and, as before, that $Y_\partial^c([g]_c)<0<Y_L$ (here, we should use the half-cylinder $[0,+\infty)\times L$  to define $Y_L$). This allows us 
	to adapt the arguments in \cite{akutagawa2003yamabe,akutagawa2014yamabe} in order to find a minimizer $u_0$ for the Yamabe quotient in the right-hand side of (\ref{yamabe:2}). As this minimizer obviously satisfies the Neumann condition along $\partial X$, we obtain a  
	conformal metric $\mathfrak g=u_0^{4/(n-2)}g$ with $\kappa_{\mathfrak g}=-1$ and {\em minimal} boundary. From this point on, we may repeat the perturbation argument above  
	by using the invertibility in the last assertion of Theorem \ref{self:adj} to complete the proof of Theorem \ref{exist:quasi:pos}.  
	
	\begin{remark}\label{bound:geo:c}
		The passage from $g$ to $\widetilde g$ above preserves the quasi-isometry class of the metric if we additionally assume that 
		$\lambda_{h_0}=d(d-1)$; compare with Remark \ref{sca::obst}.
	\end{remark}

	\section{$K$-area}
	\label{kar}
	
	The concept of $K$-area was introduced by Gromov  in order
	to quantify previous results on obstructions to the existence of
	metrics with positive scalar curvature in the smooth case \cite{gromov1980spin}. In this section we briefly review this
	classical notion and then check how it may be easily adapted to the conical setting. 
	
	\subsection{The classical $K$-area}\label{kar:class}
	Let $(X^{2k},g)$ be a closed Riemannian manifold (not necessarily
	spin). By pulling
	back a suitable hermitian bundle over the unit sphere $\mathbb S^{2k}$ under
	a degree one map, we see that the set of complex vector bundles
	over $X$ which are {\em homologically non-trivial} (i.e. with
	at least a nonzero Chern number) is nonempty. Notice that by
	Chern-Weil theory  the Chern numbers, which are topological
	invariants of $\Ec$, can be computed by integrating over $X$ certain
	universal differential forms depending on the curvature tensor
	$R^{\nabla}$ of any compatible connection $\nabla$ on $\Ec$. Thus
	$\Ec$ is {\em homologically trivial} (i.e. all Chern numbers vanish)
	if $R^{\nabla}=0$.
	
	We then let $\Ec$ vary over the set of homologically non-trivial
	hermitian bundles (and compatible connections) over $X$ and define
	the $K$-area of $(X,g)$ by
	\begin{equation}\label{karea}
		\ka(X,g)=\sup\frac{1}{\|R^{\nabla}\|_{g}},
	\end{equation}
	where
	\[
	\left\|R^{\nabla}\right\|_g=\sup_{v\wedge w\neq 0} \frac{\|R^{\nabla}_{v,
			w}\|_{\rm op}}{\,\,\|v\wedge w\|_g},
	\]
	$\|\,\,\|_{\rm op}$ is the operator norm on fiber endomorphisms 
	and $\|v\wedge w\|^2_g=g(v,v)g(w,w)-g(v,w)^2$.  Hence, the
	$K$-area as defined above is a Riemannian invariant but the fact
	that it is finite or infinite turns out to be a topological property of $X$. We note that the use of the operator norm in the definition allows us to conclude that if $\Ec_1$ and $\Ec_2$ are hermitian bundles over $(X,g)$ with compactible connections $\nabla_1$  and $\nabla_2$, respectively, then
	\begin{equation}\label{oper:cons}
		\left\|R^{\nabla_1\oplus\nabla_2}\right\|_g=\max\left\{\left\|R^{\nabla_1}\right\|_g,\left\|R^{\nabla_2}\right\|_g\right\},\quad 	\left\|R^{\nabla_1\otimes\nabla_2}\right\|_g=\left\|R^{\nabla_1}\right\|_g+\left\|R^{\nabla_2}\right\|_g, 	
	\end{equation}
	where $\nabla_1\oplus\nabla_2$ and $\nabla_1\otimes\nabla_2$ are the naturally induced connections on $\Ec_1\oplus\Ec_2$ and $\Ec_1\otimes\Ec_2$, respectively \cite{davaux21k}.

	If $X$ carries a boundary $\partial X$ we retain the definition
	(\ref{karea}) but restricting to bundles which are trivial in a neighborhood of $\partial X$.  The allowable connections are required to
	be flat in this neighborhood so that characteristic numbers related to $\Ec$
	are obtained by integrating over $X$ characteristic differential
	forms with compact support in the interior of $X$.
	For the sake of briefness, bundles
	meeting these conditions will be called {\it admissible}. 
	By the
	pullback construction referred to above, one always has $\ka(X,g)>0$.

	Examples of closed manifold with infinite $K$-area include  
	tori and, more generally, finitely enlargeable manifolds \cite{gromov1980spin}. Among these, we may cite solvable manifolds and non-positively curved manifolds whose fundamental group is residually finite. 
	A large class of examples in the bordered case appears in \cite[Lemma 16]{baer2020boundary}.
	We also note the following
	characterization, which is readily derived from the definition.
	
	\begin{proposition}\label{grom}
		$\ka(X,g)=+\infty$ if and only if for any $\epsilon>0$ there exists
		a homologically non-trivial bundle $(\Ec,\nabla)$ over $X$ with
		$\|R^{\nabla}\|_g\leq\epsilon$.
	\end{proposition}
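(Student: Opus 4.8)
The plan is to unwind the definition of $\ka(X,g)$ as the supremum in (\ref{karea}); the statement is then essentially a reformulation of what it means for a supremum to be $+\infty$. The one substantive point to dispatch first is that $\|R^\nabla\|_g>0$ for every homologically non-trivial hermitian bundle $(\Ec,\nabla)$ entering that supremum: if $\|R^\nabla\|_g=0$ then $R^\nabla=0$, so by Chern--Weil theory every Chern number of $\Ec$ vanishes, contradicting homological non-triviality. Hence $1/\|R^\nabla\|_g$ is a well-defined element of $(0,+\infty)$ for each admissible bundle, and the set over which the supremum is taken is non-empty by the degree-one pullback construction recalled above.

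For the forward implication, assume $\ka(X,g)=+\infty$ and fix $\epsilon>0$. Since $1/\epsilon$ is not an upper bound for $\{\,1/\|R^\nabla\|_g\,\}$, there is a homologically non-trivial bundle $(\Ec,\nabla)$ with $1/\|R^\nabla\|_g>1/\epsilon$, i.e. $\|R^\nabla\|_g<\epsilon$, which is the desired conclusion.

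For the converse, assume that for every $\epsilon>0$ there is a homologically non-trivial bundle $(\Ec,\nabla)$ with $\|R^\nabla\|_g\leq\epsilon$. Then $1/\|R^\nabla\|_g\geq 1/\epsilon$, so $\ka(X,g)\geq 1/\epsilon$; letting $\epsilon\to 0$ yields $\ka(X,g)=+\infty$.

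The argument is entirely formal, so there is no real obstacle beyond the positivity remark above, which is exactly where Chern--Weil theory is used to guarantee that the reciprocal in (\ref{karea}) is meaningful. In the bordered case one simply replaces ``homologically non-trivial bundle'' by ``admissible bundle'' throughout: admissible connections are flat near $\partial X$, so the relevant characteristic forms are compactly supported in the interior and the same Chern--Weil vanishing argument shows $\|R^\nabla\|_g>0$, after which the two implications go through verbatim.
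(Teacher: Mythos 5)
Your proof is correct and follows exactly the route the paper intends: the paper states the proposition as "readily derived from the definition," and your argument is precisely that unwinding, with the Chern--Weil positivity remark being the only substantive point.
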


	\subsection{The $K$-area of conical manifolds}\label{kar:kc}
	We now discuss the appropriate adaptation of the notion of $K$-area in (\ref{karea}) to the conical setting. This is achieved by simply requiring that an admissible bundle $\Ec$ should be trivial when restricted to the conical region $\mathcal C$. 
	Of course, we also assume that $\Ec|_\Cc$ is endowed with a flat connection.
	We then call such  a bundle $\mathcal C$-admissible.  Hence, if we take the supremum as in (\ref{karea}) but now restricting to {\em homologically non-trivial} $\mathcal C$-admissible bundles, 
	this yields a geometric invariant, still denoted $K(X,g)$, and named the  $K$-area of $(X,g)$. It is clear, however, that the fact that the $K$-area so defined is finite or infinite is  a {\em quasi-isometric} property of the conical manifold (it is invariant under diffeomorphisms of bounded distortion). 
	
	This notion may be easily adapted to the case in which the conical manifold carries a non-empty boundary $\Sigma$ with $\Sigma\cap\mathcal C=\emptyset$. As in the smooth case, we  retain (\ref{karea}) but restrict to $\mathcal C$-admissible, homologically non-trivial bundles which are trivial in a neighborhood of $\Sigma$, where they are endowed with a flat connection. As before, the fact that the $K_{\rm area}$ so defined is finite or infinite remains being a quasi-isometric property of the underlying conical manifold.

	\begin{remark}\label{karea:edge}
		This notion of $K$-{area} may be straightforwardly extended to the case in which the underlying manifold carries a single singular stratum, say $Y$.  This is needed in the proofs of Theorems \ref{wh} and \ref{wh2} when ${\dim X}$ is odd, in which case $Y=\{p\}\times\mathbb T\hookrightarrow X\times\mathbb T$.   
	\end{remark}
	
	\begin{example}\label{exam:karea}
		Take any compact smooth manifold  $W$ with infinite $K$-area (possibly with a non-empty boundary $\partial W$) and consider an open, relatively compact subset $U\subset W$ with smooth boundary $L$ which is
		$K$-negligible in the sense of \cite[Definition 14]{baer2020boundary}. Now excise $U$ and replace it by a conical region with link $L$ (after possibly slightly perturbing the manifold around the link so as to restore smoothness). It is easy to check that the resulting conical manifold has infinite $K$-area.
	\end{example}

	\section{The twisted conical index formulas}\label{con:ind:for:s}

	Here we present the natural extensions of classical index formulas to the conical setting in the presence of a $\Cc$-admissible bundle $\Ec\to X$ as in Subsection \ref{kar:kc}. 
	These are key ingredients in the proofs of Theorem \ref{wh}
	and \ref{wh2}.
	
	Let us assume initially that $X$ carries no boundary. Thus, if $\Ec$ is $\mathcal C$-admissible we can consider the corresponding twisted Dirac operator $\dirac_{\Ec}$. 
	
	\begin{proposition}\label{albin:fff}
		If $\kappa_g|_{\mathcal C}\geq 0$ then
		\begin{equation}\label{ext:ind:f}
			{\rm ind}\,{\dirac}_\Ec^+
			= \prescript{c}{}{\widehat A}_\Ec(X,g),
		\end{equation}
		where,  analogously to (\ref{ahat:con}),
		\begin{equation}\label{ahat:con:2}
			\prescript{c}{}{\widehat A}_\Ec(X,g):=\int_{X_0}\widehat A(TX_0)\wedge {\rm ch}(\Ec)+r\int_L\mathcal T\widehat A(TX_0)-\frac{r}{2}\eta^+(0), \quad r={\rm rank}\,\Ec.
		\end{equation}
	\end{proposition}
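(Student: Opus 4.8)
The plan is to reduce Proposition \ref{albin:fff} to the untwisted index formula (\ref{indfor:alb:gell}) of \cite{albin2016index}, exploiting that a $\Cc$-admissible bundle is trivial and flat over the conical region. First I would note that, since $\Ec|_\Cc$ carries a flat connection on a trivial bundle of rank $r$, near the cone point the twisted Dirac operator $\dirac_\Ec$ acting on $S_{X_0}\otimes\Ec$ is unitarily equivalent to $r$ orthogonal copies of $\dirac$; in particular the curvature of $\Ec$ does not enter the Weitzenb\"ock formula in $\Cc$, so the ``geometric Witt assumption'' (\ref{geo:witt}) for $\dirac_\Ec$ is equivalent to the one for $\dirac$. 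The latter holds under the hypothesis $\kappa_g|_\Cc\geq 0$, exactly as recorded after (\ref{dec}). Hence the hypotheses of \cite{albin2016index}, in the reworded form of Theorem \ref{albin:mellin}, are met for the Clifford module $S_{X_0}\otimes\Ec$, so $\dirac_\Ec$ is essentially self-adjoint, Fredholm, and ${\rm ind}\,\dirac_\Ec^+$ is well defined.

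Next I would invoke the incomplete-edge index formula of \cite{albin2016index}, which is proved there for a general Dirac-type operator with coefficients in an auxiliary bundle, applied to $S_{X_0}\otimes\Ec$ with $Y=\{p\}$. This yields
\[
{\rm ind}\,\dirac_\Ec^+=\int_{X_0}\widehat A(TX_0)\wedge {\rm ch}(\Ec)+\int_L\mathcal T\!\left(\widehat A(TX_0)\wedge {\rm ch}(\Ec)\right)-\frac12\,\eta_\Ec^+(0),
\]
where $\eta_\Ec^+(0)$ is the eta invariant of the operator $\prescript{c}{}{D}_\Ec^+$ induced on the link $(L,h_0)$ and the transgression form interpolates between the geometric representatives on the smooth part and on the conical end, as in (\ref{ahat:con}). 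The first integral converges because, $\Ec$ being flat near $p$, its integrand equals $r\,\widehat A(TX_0)$ there, and $\int_{X_0}\widehat A(TX_0)$ converges by the untwisted statement.

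Finally I would simplify the link contributions using once more the triviality of $(\Ec,\nabla)$ over $\Cc$. Since the connection is flat there, ${\rm ch}(\Ec)$ is represented by the constant $r$, whence the transgression $\mathcal T\!\left(\widehat A(TX_0)\wedge {\rm ch}(\Ec)\right)$ restricted to $L$ equals $r\,\mathcal T\widehat A(TX_0)$; moreover the vertical operator induced on $L$ by $S_{X_0}\otimes\Ec$ is $\prescript{c}{}{D}$ tensored with the trivial flat bundle of rank $r$, i.e. $r$ copies of $\prescript{c}{}{D}$, so $\eta_\Ec^+(0)=r\,\eta^+(0)$. Substituting these identities produces exactly $\prescript{c}{}{\widehat A}_\Ec(X,g)$ as defined in (\ref{ahat:con:2}), which is the claim. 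The step I expect to be the main obstacle is the first one: one must carefully check that the analytic machinery of \cite{albin2016index} is insensitive to twisting by a bundle that is merely required to be flat on the conical region rather than parallel in a product sense, and that the interface between $\Cc$ and its complement contributes no extra boundary term to the formula — this amounts to tracking the Clifford-module and connection data through their construction and confirming that the induced vertical family on $L$ is the expected orthogonal sum. Everything else is routine manipulation of characteristic forms.
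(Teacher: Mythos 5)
Your proposal is correct and follows essentially the same route as the paper: use the $\mathcal C$-admissibility (triviality plus flatness of $\Ec$ over $\mathcal C$) to see that $\dirac_\Ec|_{\mathcal C}$ is a direct sum of $r$ copies of $\dirac|_{\mathcal C}$, so the geometric Witt assumption persists under $\kappa_g|_{\mathcal C}\geq 0$, and then apply the index theorem of \cite{albin2016index} to the twisted operator with $Y=\{p\}$. Your explicit verification that the link contributions reduce to $r\int_L\mathcal T\widehat A(TX_0)$ and $r\,\eta^+(0)$ simply spells out what the paper leaves implicit in writing (\ref{ahat:con:2}), so there is no substantive difference.
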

	
	\begin{proof}
		The key point here is that the $\mathcal C$-admissibility of $\Ec$ clearly implies that   ${\dirac}_{\Ec}|_{\mathcal C}$ is just the direct sum of $r={\rm rank}\,\Ec$ copies of ${\dirac}|_{\mathcal C}$. It follows that $\mathcal R^{[\nabla]}|_{\mathcal C}=0$ in (\ref{dec:twist}) below and hence the ``geometric Witt assumption'' in \cite{albin2016index} still holds true for ${\dirac}_{\Ec}$ as long as $\kappa_g|_{\mathcal C}\geq 0$. In particular, $\dirac_\Ec$ is essentially self-adjoint and we may consider the corresponding chiral twisted Dirac operators $\dirac^\pm_\Ec$, which are adjoint to each other; {for self-adjointness, see also Theorem \ref{albin:mellin} below}. Thus, the main result in \cite{albin2016index} immediately implies that the Fredholm index of $\dirac^+_\Ec$ is given by (\ref{ext:ind:f}).
	\end{proof}

	We now present a version of the celebrated Atiyah-Patodi-Singer index formula in our context. 
	Thus, we  take $X$ to be a conical spin manifold with
	dimension $n=2k$ and a {non-empty} smooth boundary
	$\Sigma\subset X$ with $\Sigma\cap\mathcal C=\emptyset$. Introduce Fermi coordinates
	$(y,u)\in \Sigma\times [0,\delta)\to \Uc$ in a collar neighborhood
	$\Uc$ of $\Sigma$ and set $\Sigma_u=\{(y,u); y\in\Sigma\}$ so that
	$\Sigma_0=\Sigma$. Then, restricted to $\Uc$,
	\begin{equation}\label{dir:fermi}
		\dirac=\mathfrak c\left({\partial_u}\right)\left({\partial_u}+ D-\frac{1}{2}H\right),
	\end{equation}
	where  $H$ is the mean curvature of the embeddings 
	$\Sigma_u\subset\Uc$ {(computed with respect to the inward unit vector field)} and  $D$ is the tangential Dirac operator, which is
	defined as follows. For each $u$, $S_X|_{\Sigma_u}$ comes equipped
	with the Clifford product $\mathfrak c^u=-\mathfrak c(\partial_u)\mathfrak c$.  Hence,  if we  define the induced connection by
	$$
	\nabla^u=\nabla-\frac{1}{2}\mathfrak c^u(B),
	$$
	where $B$ is the shape operator of the embedding $\Sigma_u\hookrightarrow
	\Uc$, then
	$$
	D=\sum_{i=1}^{2k-1}\mathfrak c^u(\eh_i)\nabla^u_{\eh_i},
	$$
	where $\{\eh_i\}$ is an orthonormal basis tangent to  $\Sigma_u$.
	
	After tensoring with $\Ec$, a $\Cc$-admissible bundle as above, we obtain a  first order self-adjoint
	elliptic operator $D_\Ec$ acting on
	$\Gamma(S_{X_0}\otimes\Ec|_{\Sigma})$ and commuting with the chirality operator,  
	so we can decompose
	$\mathbb{S}_\Ec:=S_{X_0}\otimes \Ec|_{\Sigma}$ as $\mathbb{S}_\Ec=\mathbb{S}_\Ec^+\oplus\mathbb{S}_\Ec^-$,
	and accordingly, $D_\Ec=D_\Ec^+\oplus D_\Ec^-$, with
	$D_\Ec^{\pm}$ being self-adjoint. Under the natural identification
	$\mathbb{S}_\Ec^+=\mathbb{S}_\Ec^-$ one has
	$D_\Ec^+=-D_\Ec^-$ and hence ${{\rm Spec}}(D_\Ec)$ is
	symmetric with respect to $0\in\mathbb{R}$, but of course this does
	not need happen with the factors $D_\Ec^{\pm}$. Thus, for ${{\rm
			Re}}\,z\gg 0$ we define the eta function
	\[
	\eta_\Ec^+(z)=\sum_{0\neq\lambda\in {{\rm Spec}}(D_\Ec^+)} ({\rm
		sign}\,\lambda)|\lambda|^{-z}.
	\] 
	This extends meromorphically to the whole
	complex plane with the origin not being a pole and $\eta_\Ec^+(0)$
	is a well defined real number called the {\it eta invariant} of
	$D_\Ec^+$. It measures the overall asymmetry of ${{\rm
			Spec}}(D_\Ec^+)$ with respect to the origin. Henceforth, we denote by $\eta^+(0)$ the corresponding {\em untwisted} spectral invariant of $D^+$.

	We now introduce the so-called Atiyah-Patodi-Singer (APS) type boundary conditions.
	If $\Dc$ is a self
	adjoint elliptic operator acting on sections of a bundle
	$\Fc\to\Sigma$, we denote by $\Pi_{I}(\Dc):L^2(\Fc)\to L^2(\Fc)$ the
	spectral projection of $\Dc$ associated to the interval
	$I\subset\mathbb{R}$. Also, if $\psi\in\Gamma(S_W\otimes\Ec)$ we set
	${\varphi}=\psi|_{\Sigma}$. Now  consider $\Gamma_{\geq
		0}(S^+_{X_0}\otimes\Ec)=\{\psi\in
	\Gamma_{\rm cpt}(S^+_{}X_0\otimes\Ec);\Pi_{[0,+\infty)}(D_\Ec)\varphi=0\}$
	and $\Gamma_{
		>0}(S^-_{X_0}\otimes\Ec)=\{\psi\in \Gamma_{\rm cpt}(S^-_{X_0}\otimes\Ec);
	\Pi_{(0,+\infty)}(D_\Ec)\varphi=0\}$, where the subscript ${\rm cpt}$ means compactly supported sections. These are  the (core) domains of
	the operators
	\[
	\dirac^+_{\Ec,\geq 0}={{\dirac^+}|}_{\Gamma_{\geq
			0}(S^+_{X_0}\otimes\Ec)}:\Gamma_{\geq 0}(S^+_{X_0}\otimes \Ec)\to
	\Gamma(S^-_{X_0}\otimes\Ec) \label{dec1}
	\]
	and
	\[
	\dirac^-_{\Ec,> 0}={{\dirac^-}|}_{\Gamma_{>
			0}(S^-_{X_0}\otimes\Ec)}:\Gamma_{>0}(S^-_{X_0}\otimes\Ec)\to
	\Gamma(S^+_{X_0}\otimes\Ec), \label{dec2}
	\]
	respectively. Since $\mathfrak c(\partial_u)$ anti-commutes with $D$, these operators are adjoint to each other if the `'geometric Witt assumption'' holds; see Remark \ref{vanis:bd}. Moreover,
	$\dirac^+_{\Ec,\geq 0}$ is a Fredholm operator with a well defined
	index
	\[
	{{\rm ind}}\,\dirac^+_{\Ec,\geq 0}=\dim\ker \dirac^+_{\Ec,\geq
		0}-\dim\ker \dirac^-_{\Ec,> 0}.
	\]
	The following formula, which is the natural extension to the conical setting of a classical result \cite{atiyah1975spectral,booss1993bavnbek,gilkey1993index,grubb1992heat}, computes this invariant.

	\begin{theorem}\label{aps:conic} Under the conditions above,  
		\begin{equation}\label{indfor:con}
			{\rm ind}\,\prescript{c}{}{\dirac}^+_{\Ec,\geq 0}
			= \prescript{c}{}{\widehat A}_\Ec(X_0,g)
			+\int_{\Sigma}\Tc\widehat{A}(TX_0)\wedge {\rm ch}(\Ec)-\xi_\Ec^+(0),
		\end{equation}
		where 
		\begin{equation}\label{eta:twist}
			\xi_\Ec^+(0)=\frac{1}{2}\left(\eta_\Ec^+(0)+\dim\ker
			D_\Ec^+\right).
		\end{equation}
	\end{theorem}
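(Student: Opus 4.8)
The plan is to follow the classical derivation of the Atiyah--Patodi--Singer formula, keeping track of two separate contributions from the boundary: the collar $\Uc$ near $\Sigma$, which is handled exactly as in the smooth APS theory, and the conical region $\Cc$, which has already been taken care of by the discussion leading to Proposition \ref{albin:fff} and the index theory of \cite{albin2016index}. Concretely, I would argue via a \emph{doubling-type} or, better, a \emph{localization of the index density} argument. Since $\prescript{c}{}{\dirac}^+_{\Ec,\geq 0}$ is Fredholm (as stated), the McKean--Singer formula gives
\[
{\rm ind}\,\prescript{c}{}{\dirac}^+_{\Ec,\geq 0}
= \lim_{s\to 0^+}{\rm Tr}\left(e^{-s\prescript{c}{}{\dirac}^-_{\Ec,>0}\prescript{c}{}{\dirac}^+_{\Ec,\geq 0}}\right)-{\rm Tr}\left(e^{-s\prescript{c}{}{\dirac}^+_{\Ec,\geq 0}\prescript{c}{}{\dirac}^-_{\Ec,>0}}\right),
\]
and the supertrace of the heat kernel splits, in the small-time limit, into an interior contribution, a contribution from the smooth boundary $\Sigma$, and a contribution localized at the cone tip $\{p\}$.

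The key steps, in order, are as follows. First I would invoke the essential self-adjointness of $\dirac_\Ec$ (guaranteed by the geometric Witt assumption, which holds since $\kappa_g|_{\Cc}\geq 0$ and $\Ec|_\Cc$ is flat, exactly as noted in the proof of Proposition \ref{albin:fff}) so that $\prescript{c}{}{\dirac}^+_{\Ec,\geq 0}$ and $\prescript{c}{}{\dirac}^-_{\Ec,>0}$ are genuine adjoints and the index is well defined. Second, away from $\{p\}$ the problem is a standard APS boundary value problem on a manifold with boundary $\Sigma$ and with one end truncated; the local index density is the Atiyah--Singer integrand $\widehat A(TX_0)\wedge{\rm ch}(\Ec)$, the collar of $\Sigma$ produces the transgression term $\int_\Sigma \Tc\widehat A(TX_0)\wedge{\rm ch}(\Ec)$ together with the boundary correction $-\xi_\Ec^+(0)=-\tfrac12(\eta_\Ec^+(0)+\dim\ker D_\Ec^+)$ --- this is verbatim the argument in \cite{atiyah1975spectral,grubb1992heat,booss1993bavnbek}, using the product form (\ref{dir:fermi}) of $\dirac$ in Fermi coordinates. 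Third, the cone tip contributes precisely the extra terms $r\int_L\Tc\widehat A(TX_0)-\tfrac r2\eta^+(0)$ already packaged inside $\prescript{c}{}{\widehat A}_\Ec(X_0,g)$: since $\dirac_\Ec|_\Cc$ is $r$ copies of $\dirac|_\Cc$, the analysis of the heat kernel near $\{p\}$ reduces to $r$ copies of the untwisted cone computation of \cite{albin2016index}, whence the tip correction involving $\eta^+(0)$ and the transgression over the link $L$. Assembling the three pieces gives (\ref{indfor:con}).

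Alternatively --- and this is probably the cleaner route to write down --- I would \emph{attach an infinite cylindrical end} $(-\infty,0]\times\Sigma$ to $\Sigma$, so that the APS boundary condition becomes the condition of $L^2$-integrability on the end, and then observe that the resulting complete-at-$\Sigma$, conical-at-$p$ manifold is exactly of the type treated in \cite{albin2016index} once we also record the spectral data of $D_\Ec$ at the cylindrical end. The main index formula of \cite{albin2016index} then yields the interior integral $\prescript{c}{}{\widehat A}_\Ec(X_0,g)$ plus the transgression $\int_\Sigma\Tc\widehat A(TX_0)\wedge{\rm ch}(\Ec)$ plus a boundary eta term; matching conventions, the boundary eta term is exactly $-\xi_\Ec^+(0)$ with $\xi_\Ec^+(0)$ as in (\ref{eta:twist}), the extra $\tfrac12\dim\ker D_\Ec^+$ arising from the choice of APS projection $\Pi_{[0,+\infty)}$ (closed interval) rather than $\Pi_{(0,+\infty)}$.

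The main obstacle I anticipate is \textbf{bookkeeping of the two boundary contributions and their sign/normalization conventions}: the cone tip and the smooth boundary $\Sigma$ both contribute eta-type invariants ($\eta^+(0)$ for the induced operator $\prescript{c}{}{D}^+$ on $L$, and $\eta_\Ec^+(0)$ for the tangential operator $D_\Ec^+$ on $\Sigma$), and one must be careful that the orientation/inward-normal conventions used in (\ref{dir:fermi}) for the collar of $\Sigma$ are consistent with those implicit in (\ref{ahat:con}) and (\ref{indfor:alb:gell}) for the cone. Once (\ref{ahat:con:2}) is taken as the \emph{definition} of $\prescript{c}{}{\widehat A}_\Ec(X_0,g)$, the cone side is fixed by Proposition \ref{albin:fff}, and the only real work is verifying that the smooth-boundary side reproduces the classical APS correction with the half-kernel-dimension term as in (\ref{eta:twist}); this is routine but must be done with care. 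I would also need the fact (Remark \ref{vanis:bd}) that the geometric Witt assumption makes $\prescript{c}{}{\dirac}^+_{\Ec,\geq 0}$ and $\prescript{c}{}{\dirac}^-_{\Ec,>0}$ adjoint to each other, so that no additional cross terms between the tip and $\Sigma$ appear in the heat-kernel supertrace.
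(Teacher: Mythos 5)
Your first route—splitting the heat-kernel supertrace into an interior term, the classical APS contribution from the collar of $\Sigma$ (with the $-\xi_\Ec^+(0)$ correction), and the cone-tip contribution handled by the Albin--Gell-Redman analysis (which reduces to $r$ untwisted copies since $\Ec|_\Cc$ is flat)—is exactly the paper's argument, which it states as splicing the parametrices of \cite{albin2016index} and \cite{grubb1992heat} and applying standard heat asymptotics. Your proposal is correct and matches the paper's (omitted) proof, with the adjointness via the geometric Witt assumption being the same prerequisite the paper records in Remark \ref{vanis:bd}.
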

	
	\begin{proof} This is just a matter of 
		splicing together the parametrices in \cite{albin2016index,grubb1992heat} and then using  standard heat asymptotics. The details are omitted.
	\end{proof}

	\begin{remark}\label{rig:con:chou}
		In case the conical metric is {\em rigid} ($h=0$), versions of the index formulas in (\ref{indfor:alb:gell}) and (\ref{ext:ind:f}) appear in \cite{chou1985dirac}. We also refer to \cite{lesch1997differential}, where similar formulas may be found. 
	\end{remark}	
	
	\section{The proofs of Theorems \ref{wh} and \ref{wh2}}\label{demon}
	
	Here we present the proofs of Theorems  \ref{wh} and \ref{wh2}. Given the index formulas (\ref{ext:ind:f}) and (\ref{indfor:con}), they are rather straightforward extensions of the classical arguments. 
	
	\subsection{The proof of Theorem \ref{wh}}\label{proof:wh} Assume that $(X,g)$ is a conical manifold with $\kappa_g>0$ and $K_{\rm area}(X)=+\infty$. We first observe that (\ref{ext:ind:f}) may be rearranged as 
	\[
	{\rm ind}\,{\dirac}_{\widetilde \Ec}^+=r\prescript{c}{}{\widehat A}(X,g)+\int_{X_0}\widehat A(TX)\wedge\widehat{{\rm ch}}(\Ec),
	\]
	where the {\em reduced} Chern character of $\Ec$ is
	\[
	{\widehat{\rm ch}}(\Ec)={\rm ch}(\Ec)-r
	={\rm ch}_1(\Ec)+{\rm ch}_2(\Ec)+\ldots,
	\]
	with ${\rm ch}_i(\Ec)\in \Gamma(\wedge^{2i}TX_0)$  defined by a universal homogeneous characteristic polynomial of degree $i$   in  $R^{\nabla}$. Since by Theorem \ref{albin:top} we may assume that $\prescript{c}{}{\widehat A}(X,g)=0$, this reduces to 
	\begin{equation}\label{ind:f:red:ch}
		{\rm ind}\,{\dirac}_{\Ec}^+=\int_{X_0}\widehat A(TX)\wedge\widehat{{\rm ch}}(\Ec),
	\end{equation}

	We now observe that, by the analogue of Proposition \ref{grom}, for each $\epsilon>0$ there exists a $\Cc$-admissible, homologically non-trivial bundle $(\Ec,\nabla)$ such that $\|R^{\nabla}\|_g\leq\epsilon$. It then follows that $\|\mathcal R^{[\nabla]}\|\leq (2k(2k-1)/2)\epsilon$, where $\mathcal R^{[\nabla]}$ appears in the Weitzenb\"ock decomposition of the corresponding Dirac Laplacian:
	\begin{equation}\label{dec:twist}
		\dirac_\Ec^2=\nabla^*\nabla+\frac{1}{4}\kappa_g+\mathcal R^{[\nabla]};  
	\end{equation}
	see \cite[Lemma 1]{baer2020boundary}.
	Notice that (\ref{dec:twist}) is the twisted version of (\ref{dec}).
	
	Thus, if $\epsilon$ is small enough we have $\kappa_g/4+\mathcal R^{[\nabla]}>0$, so that $\ker \dirac_\Ec=\{0\}$ and hence ${\rm ind}\,{\dirac}_{\Ec}^+=0$. From (\ref{ind:f:red:ch}),
	\begin{equation}\label{sub:est:r:ad:ad}
		\int_{X_0}\widehat A(TX)\wedge{\widehat{\rm ch}}(\Ec)=0,
	\end{equation}
	and we are in a position to 
	follow \cite{gromov1996positive} and bring the mechanism of Adams operations to our discussion. 
	Recall that this is a rule that to each $\mu\in \mathbb N$  and $\Ec$ as above associates a (virtual) bundle $\Psi_\mu\Ec$  
	which is a universal expression in terms of tensor products of exterior powers of $\Ec$. It is compatible with the Chern character map in the sense that 
	\[
	{\rm ch}(\Psi_\mu\Ec)=\sum_{j\geq 0}{\rm ch}_j(\Ec)\mu^j, 
	\]
	In particular, ${\rm rank}\,\Psi_\mu\Ec=r={\rm rank}\,\Ec$.
	Moreover, for each $\nu\in\mathbb N$ and a multi-index $\mu_{(\nu)}=(\mu_1,\ldots,\mu_\nu)$, one has that 
	\[
	\Psi_{\mu_{(\nu)}}\Ec:=\Psi_{\mu_1}\Ec\otimes\cdots\otimes \Psi_{\mu_\nu}\Ec
	\] 
	satisfies ${\rm rank}\,\Psi_{\mu_{(\nu)}}\Ec=r^\nu$ and 
	\begin{equation}\label{prod:adams}
		{\rm ch}_j\Psi_{\mu_{(\nu)}}\Ec=\sum_{i_1+\ldots+i_\nu=j}\mu_1^{i_1}\cdot\ldots\cdots \mu_\nu^{i_\nu}{\rm ch}_{i_1}(\Ec)\wedge\ldots\wedge {\rm ch}_{i_\nu}(\Ec). 
	\end{equation}
	Notice that each $\Psi_{\mu_{(\nu)}}\Ec$ is $\mathcal C$-admissible by (\ref{oper:cons}).

	We now observe that, again   by (\ref{oper:cons}),  the bounds on $R^{\nabla}$ are `'stable'' under Adams operations, so 
	the argument leading to (\ref{sub:est:r:ad:ad}) works fine for {any}  $\Psi_{\mu_{(\nu)}}\widetilde\Ec$ as long as $\nu$ remains bounded. Thus,  
	\begin{equation}\label{sub:est:r:ad:ad2}
		{\rm ind}\,\dirac_{	\Psi_{\mu_{(\nu)}}\Ec}^+=	\int_{X_0}\widehat A(TX)\wedge{\widehat{\rm ch}}(\Psi_{\mu_{(\nu)}}\Ec)=0. 
	\end{equation}
	On the other hand, from (\ref{prod:adams}) with $\nu=k$, we have
	$
	{\rm ind}\,\dirac_{	\Psi_{\mu_{(k)}}\Ec}^+=\mathsf Q(\mu_{(k)})
	$,
	a polynomial of degree at most $k$ given by 
	\begin{equation}\label{poly:adms}
		\mathsf Q(\mu_{(k)})=\sum_{i_1+\ldots+i_k=k}\mu_1^{i_1}\cdot\ldots\cdot \mu_k^{i_k}\int_X{\rm ch}_{i_1}(\Ec)\wedge\ldots\wedge {\rm ch}_{i_k}(\Ec)+\ldots,
	\end{equation}
	where the dots correspond to `'lower order terms'' (that is, terms whose coefficients are characteristic numbers necessarily involving the curvature of $g$). 
	It follows that the top degree coefficients in (\ref{poly:adms}), the Chern character numbers of $\Ec$,  vanish. Since the Chern numbers of $\Ec$ may be expressed as a rational linear combination  in these coefficients, we conclude that $\Ec$ is homologically trivial. This contradiction completes the proof of Theorem \ref{wh}  at least if ${\rm dim}\,X$ is even. 
	
	In the odd case, one has to deal with $X\times\mathbb T$ endowed with the product metric, a manifold which is singular along an edge diffeomorphic to $\mathbb T$. Index formulas for such objects also follow from the general theory in \cite{albin2016index} and the discussion in Section \ref{con:ind:for:s} above. In fact, since a tubular neighborhood of $\mathbb T$ is isometric to the product $\mathcal C\times\mathbb T$,  the resulting formulas are the same as  in (\ref{ext:ind:f}) and (\ref{indfor:con}) if we normalize the circle to have unit length. 
	Moreover, as observed in Remark \ref{karea:edge}, the notion of $K$-{area} may be extended to this case. 
	Hence, this situation may be handled with a straightforward adaptation of the argument above to this simple edge setting. We leave the details to the interested reader.

	\subsection{The proof of Theorem \ref{wh2}}\label{proof:wh2}

	In accordance with the general strategy put forward in \cite[Section 1.1]{gromov2018metric}, the idea is to reduce to Theorem \ref{wh} via a doubling argument.  We then consider the doubled manifold $\mathsf DX= X\cup_{\partial X} - X$, where the minus sign denotes opposite orientation. Note that $\mathsf D X$ is spin. Moreover, if $\partial X\hookrightarrow X$ is mean convex, it follows from \cite{gromov1980spin, almeida1985minimal} that the doubled metric $\mathsf Dg$ on $\mathsf D X$ may be deformed to a conical metric, say $\widetilde g$, with positive scalar curvature everywhere and which additionally remains in the same quasi-isometry class as $\mathsf D g$. Precisely, the deformation may be chosen so as to meet the following properties:
	\begin{itemize}
		\item it is supported in an arbitrarily small neighborhood of the common inner boundary $\partial X$;
		\item it may be taken arbitrarily small in the $C^0$-topology;
	\end{itemize}
	Granted these properties of the deformation, which for the sake of completeness we discuss in Appendix \ref{bend} below, we see that $K_{\rm area}(\mathsf DX,\widetilde g)=+\infty$ and Theorem \ref{wh} immediately applies to complete the argument.
	
	\begin{remark}\label{bar:han}
		We may also obtain a proof of Theorem \ref{wh2} by directly applying APS index theory to the bordered conical manifold $(X,g)$. Indeed, if $\Ec$ is a $\Cc$-admissible bundle, recall that there exists a neighborhood $U$ 
		of $\Sigma$ such that $\Ec|_U=U\times\mathbb C^r$ endowed with a flat connection. Thus,
		(\ref{indfor:con}) gets reduced to 
		\[
		{\rm ind}\,{\dirac}^+_{\Ec,\geq 0}
		= \prescript{c}{}{\widehat A}_\Ec(X_0,g)
		+r\int_{\Sigma}\Tc\widehat{A}(TX_0)-r\xi^+(0),
		\]
		where $\xi^+(0)$ is the untwisted version of (\ref{eta:twist}). Combining this with (\ref{ahat:con:2}) and using that 
		\[
		\int_{X_0}\widehat A(TX_0)\wedge{\rm ch}(\Ec)=r\int_{X_0}\widehat A(TX_0)+ 	\int_{X_0}\widehat A(TX_0)\wedge{\widehat{\rm ch}}(\Ec),
		\]
		we find that 
		\begin{eqnarray*}
			{\rm ind}\,{\dirac}^+_{\Ec,\geq 0}
			& = & 
			r\left(\widehat A(X_0,g)+\int_\Sigma\mathcal T\widehat A(X_0)-\xi^+(0)\right)+\int_{X_0}\widehat A(TX_0)\wedge{\widehat{\rm ch}}(\Ec)\\
			& = & r\,{\rm ind}\,{\dirac}^+_{\geq 0}+\int_{X_0}\widehat A(TX_0)\wedge{\widehat{\rm ch}}(\Ec),
		\end{eqnarray*}
		where we used the untwisted version of (\ref{indfor:con}) in the last step. This may be rewritten as 
		\[
		{\rm ind}\,{\dirac}^+_{\widehat\Ec,\geq 0}=\int_{X_0}\widehat A(TX_0)\wedge{\widehat{\rm ch}}(\Ec),
		\]
		where $\widehat\Ec=\Ec-\Theta^r$ is the associated virtual bundle (here, $\Theta^r=X_0\times\mathbb C^r$). With this `'relative'' APS index formula at hand, the argument follows exactly as in \cite[Section 2.3]{baer2020boundary}. 
	\end{remark}

	\appendix
	\section{The deformation}\label{bend}
	
	Here we discuss the claims regarding the deformation  used in the proof of Theorem \ref{wh2} above.  
	The argument appeared originally in \cite{almeida1985minimal,gromov1980spin}. 
	This also follows from the more sophisticated procedure in \cite{baer2020boundary}.
	
	We start by fixing Fermi coordinates in a collar neighborhood $\mathcal V=\partial X\times [0,\delta)$ of  $\partial X$, so that the metric $g$ may be written as $g=h_v+dv^2$, where $h_v=g|_{\partial X_v}$ with $\partial X_v=\{v\}\times\partial X$.  Recall that the second fundamental form $B_v$ of $\partial X_v\hookrightarrow X$ is given by 
	\[
	B_v=-\frac{1}{2}\frac{\partial h_v}{\partial v}. 
	\]  
	Following \cite{almeida1985minimal}, we take a everywhere positive $f:[0.+\infty)\to\mathbb R$ and consider the metric 
	$\widehat g=\widehat h_v+dv^2$, where $\widehat h_v=fh_v$. With obvious notation, 
	\[
	\widehat B_v=-\frac{1}{2}\frac{\partial (fh_v)}{\partial v}=-\frac{1}{2}f'h_v+fB_v, \quad \widehat H_v=-\frac{n-1}{2}\frac{f'}{f}+H_v,
	\] 
	where $H$ denotes mean curvature and the prime is derivative with respect to $v$. We now choose $f$ so that $f'(0)<0$ and  $f(v)=1$ for $v\geq \delta/2$. Since $H_0\geq 0$, we have $\widehat H_0>0$. 
	Moreover, if we further impose that 
	$\|f-1\|_{C^2}<\delta$ and  
	pick
	$\delta$ small enough then the scalar curvature of $\widehat g$ remains negative.
	It is clear that the deformation $\widehat g -g$ meets all the requirements posed in the proof of Theorem \ref{wh2} (in fact, at this stage it may be chosen arbitrarily small in the $C^2$ topology).  
	
	We now pass to the last step of the deformation, starting with the manifold $(X,\widehat g)$ constructed above. We follow \cite{gromov1980spin} and consider a collar neighborhood $\mathcal W\hookrightarrow X$ of $\partial X$ with radius $10\varepsilon<\delta$ small and radial parameter $w\in[0,10\varepsilon)$.   Within $\mathcal W\times \mathbb R$ endowed with the product metric, say $G$,  we consider $\mathcal Z_{\varepsilon}$, the set of points whose distance to $\mathcal W_{\geq \varepsilon}=\{x\in \mathcal W; {\rm dist}_{\widehat g}(x,\partial X)\geq \varepsilon\}$ equals $\varepsilon$. Thus, 
	$\mathcal Z_{\varepsilon}$ consists of two $\varepsilon$-equidistant copies, namely $\mathcal W^\pm_{\geq \varepsilon}:=\mathcal W_{\geq \varepsilon}\times\{\pm\varepsilon\}$, of $\mathcal W_{\geq \varepsilon}$ and a semi-circular piece $\mathcal Z^\bullet_{\varepsilon}$ which is isometric to the product of $\partial X$ with the semi-circle of radius $\varepsilon$ parametrized by $\theta\in [-\pi/2,\pi/2]$; see Figure \ref{figg} below. 
	
	\begin{figure}[h]
		\begin{center}
			\begin{tikzpicture}
				\begin{scope}
					\draw[<->] (-4.5,1) -- (-4.5,0);
					\draw[->] (-4.5,1) -- (-4.5,2);
					\draw[<->] (-4.5,0) -- (-4.5,-1);
					\draw (-4.5,-1) -- (-4.5,-2);
					\draw (-6,1) -- (0,1);
					\draw[dashed] (-7,0) -- (0,0);
					\draw[thick] (0,0) -- (1,0);
					\draw[thick] (0,0) -- (0.7,0.7);	
					\draw (-6,-1) -- (0,-1);
					\draw[very thick] (0,-1) arc(-90:90:1cm);
					\draw (0.3,0) arc(0:16:1cm);
					\node at (-4.3,-0.5) [shape=circle] {$\varepsilon$};
					\node at (-4.3,0.5) [shape=circle] {$\varepsilon$};
					%	\node at (-4.3,2.0) [shape=circle] {$\mathbb R$};
					\node at (-7.3,0) [shape=circle] {$\mathcal W$};
					\node at (1.12,-0.7)  [shape=circle] {$\mathcal Z_{\epsilon}^\bullet$};
					\node at (1.4,0)  [shape=circle] {$\partial X$};
					\node at (-0.4,0.3)  [shape=circle] {$\partial\mathcal W_{\geq \varepsilon}$};
					\node at (-3,0.3)  [shape=circle] {$\mathcal W_{\geq \varepsilon}$};
					\node at (-3,1.3)  [shape=circle] {$\mathcal W^+_{\geq \varepsilon}$};
					\node at (-3,-1.3)  [shape=circle] {$\mathcal W^-_{\geq \varepsilon}$};
					\node at (0,0)  [shape=circle] {$\bullet$};
					%		 \node at (0,1)  [shape=circle] {$\blacksquare$};
					%		 \node at (0,-1)  [shape=circle] {$\blacksquare$};
					\node at (1,0)  [shape=circle] {$\bullet$};
					%\node at (0.7,0.7)  [shape=circle] {$\bullet$};
					\node at (0.5,0.2)  [shape=circle] {$\theta$};
				\end{scope}
			\end{tikzpicture}
		\end{center}
		\caption{The Gromov-Lawson construction}
		\label{figg}
	\end{figure}
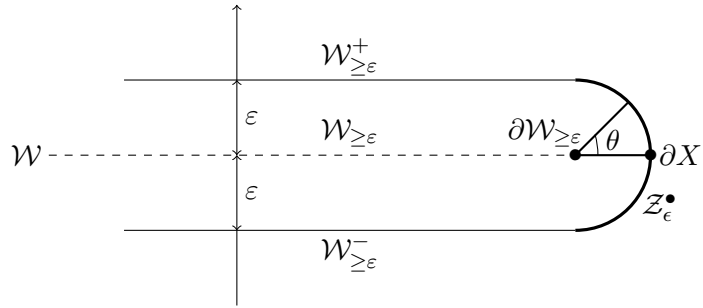 
	
	As explained in \cite{gromov1980spin,gromov2018metric}, as $\varepsilon\to 0$ we obtain asymptotic expansions for the principal curvatures $\lambda_1,\dots,\lambda_n$ of $\mathcal Z^\bullet_{\varepsilon}$ in terms of the principal curvatures $\mu_1,\dots,\mu_{n-1}$ of $\partial X$ and $\cos\theta$. Precisely, 
	\[
	\lambda_i=\left(\mu_i+O(\varepsilon)\right)\cos\theta + o(\varepsilon), \quad \lambda_n=\varepsilon^{-1}+O(1). 
	\]
	From this we find that 
	\[
	\kappa_{G|_{\mathcal Z^\bullet_{\varepsilon}}}(x,\theta)=\kappa_{\widehat g}(x)+\left(2\varepsilon^{-1}\widehat H_0+O(1)\right)\cos\theta+o(1), \quad x\in \partial X,
	\]
	and since
	$\widehat H_0>0$ and $\kappa_{\widehat g}>0$, we conclude that $\kappa_{G|_{\mathcal Z^\bullet_{\varepsilon}}}>0$ if $\varepsilon$ is small enough.
	Thus, we may view $(\mathcal Z^\bullet_{\varepsilon},G|_{\mathcal Z^\bullet_{\varepsilon}})$ as interpolating between two copies of $(X,\widehat g)$ so as to form the doubled manifold $(\mathsf D X, \widetilde g)$ appearing in the proof of Theorem \ref{wh2} above. Finally, note that the  creases along which $\mathcal Z^\bullet_{\varepsilon}$ meets the copies $\mathcal W^\pm_{\geq \varepsilon}$  
	may be easily smoothed out while still preserving all the relevant geometric properties.
	This justifies the existence of the desirable deformation. 
	
	\section{Mapping properties in weighted Mellin-Sobolev spaces}\label{con:mapp:p}

	Our aim here is to present the proof of Theorem \ref{self:adj}. This is certainly well-known and may be confirmed in a variety of ways on inspection of standard sources; see for instance \cite{mazzeo1991elliptic,schulze1998boundary,lesch1997differential}, among others. However, since in these references the specific result we are interested in is usually immersed in rather elaborate theories, we find it convenient to include here a sketch of the argument in the setting of the weighted Mellin-Sobolev spaces introduced in Section \ref{exist:scal}. We further illustrate the flexibility of these methods by  establishing the mapping properties of the conical Dirac operator, thus providing,  in the conical setting,  an alternate approach to some of the analytical results in \cite{albin2016index}.

	\subsection{The Laplacian and its perturbations}\label{lap:map:sub}
	As a preliminary step, we consider the mapping properties of the Laplacian
	\begin{equation}\label{map:lap:g}
		\Delta_g:\mathcal H_\gamma^{s,p}(X,g)\to  \mathcal H^{s-2,p}_{\gamma-2}(X,g),
	\end{equation}
	where $g$ is a conical metric. Recall that $g=g_0+h$, where $g_0=dx^2+x^2h_0$ and $|h|_{g_0}=O(x^2)$. 
	A possible route to approach the Fredholmness
	of this map is to consider instead the {\em core} Laplacian
	\begin{equation}\label{core:lap}
		(\Delta_g,C^\infty_{\rm cpt}(X_0)):C^\infty_{\rm cpt}(X_0)\subset \mathcal H_\gamma^{0,2}(X,g)\to \mathcal H_\gamma^{0,2}(X,g),
	\end{equation}
	an unbounded, densely defined operator. As usual, the closure of this operator is $(\Delta_g,D_{\rm min}(\Delta_g)$, where 
	\[
	D_{\rm min}(\Delta_g)=\left\{u\in \mathcal H_\gamma^{0,2}(X,g); \exists \{u_n\}\subset C^\infty_c(X_0); u_n\stackrel{\mathcal H_\gamma^{0,2}}{\to} u, \{\Delta_gu_n\}\,\, {\rm is}\,\, \mathcal H_\gamma^{0,2}-{\rm Cauchy} \right\}.
	\]
	On the other hand, the associated adjoint operator is $(\Delta_g,D_{\rm max}(\Delta_g))$, where 
	\[
	D_{\rm max}(\Delta_g)=\left\{u\in \mathcal H_\gamma^{0,2}(X,g);\Delta_gu\in \mathcal H_\gamma^{0,2}(X,g)\right\}.
	\] 
	Regarding these notions, the following facts are well-known:
	
	\begin{itemize}
		\item $D_{\rm min}(\Delta_g)\subset D_{\rm max}(\Delta_g)$;
		\item If $(\widehat\Delta_g,{\rm Dom}(\widehat\Delta_g))$ is a closed extension of $(\Delta_g,C^\infty_c(X_0))$ then
		\[
		D_{\rm min}(\Delta_g)\subset {\rm Dom}(\widehat\Delta_g)\subset  D_{\rm max}(\Delta_g).
		\]  
	\end{itemize}
	Thus, in order to understand the set of closed extensions, we need to look at the subspaces of  	
	the {\em asymptotics space} 
	\[
	\mathcal Q(\Delta_g):=\frac{D_{\rm max}(\Delta_g)}{D_{\rm min}(\Delta_g)}.
	\]
	In particular, $\mathcal Q(\Delta_g)=\{0\}$ implies that the Laplacian {has a unique closed extension and hence the associated map (\ref{map:lap:g}) is Fredholm. In particular, it is essentially self-adjoint whenever it is symmetric.}

	Recall that in conical coordinates $(x,z)$ around $p$ and in terms of the Fuchs operator $\mathsf D=x\partial_x$, the Laplacian $\Delta_g$ satisfies 
	\[
	P:=x^2\Delta_g=\mathsf D^2+\left(d-1+O(x)\right)\mathsf D+\Delta_{h_0}.  
	\]
	As already highlighted, the needed ingredients to establish the mapping properties for $\Delta_g$ include not only its ellipticity, but also the invertibility of the so-called {\em conormal symbol}, which is obtained by freezing the coefficients of $P$ at $x=0$, that is, passing to 
	\[
	P_0=\mathsf D^2+\left(d-1\right)\mathsf D+\Delta_{h_0}, 
	\] 
	and then applying the Mellin transform $\mathsf M$; see \cite{schrohe1999ellipticity,schulze1998boundary} and also (\ref{conor:symb}) below,  where this construction is actually applied to an appropriate conjugation of $P_0$.
	For our purposes, it suffices to know that  $\mathsf M$
	is a linear map that to each well-behaved function $f:\mathbb R_+\to\mathbb C$ associates another function $\mathsf M(f):U_f\subset \mathbb C\to \mathbb C$
	meeting the following properties: 
	\begin{itemize} 
		\item For each $\theta\in\mathbb R$, the map 
		\[
		x^\theta L^2(\mathbb R_+,d_+x)\stackrel{\mathsf M}{\longrightarrow} 
		L^2(\Gamma_{-\theta}), 
		\]
		is an isometry (recall that $d_+x=x^{-1}dx$). Here, {$\Gamma_\alpha=\{\zeta\in\mathbb C;{\rm Re}\,\zeta=\alpha\}$, $\alpha\in\mathbb R$, and  $x^\theta L^2(\mathbb R_+,d_+x)$ is endowed with the inner product
			\[
			\langle u,v\rangle_{x^\theta L^2(\mathbb R_+,d_+x}=\langle x^{-\theta}u,x^{-\theta}v\rangle_{L^2(\mathbb R_+,d_+x)}.
			\]
		}
		Moreover, each element $u$ in the image extends holomorphically to the half-space $\{\zeta\in\mathbb C;{\rm Re}\,\zeta{>}-\theta\}$ (Notation: $u\in\mathscr H(\{{\rm Re}\,\zeta>-\theta\})$).
		\item $\mathsf M(\mathsf Df)(\zeta)=-\zeta\mathsf M(f)(\zeta)$.  
	\end{itemize}

	This suggests to work with the `'Mellin'' volume element $d{\rm vol}_{g,c}=d_+x\sqrt{\det \mathfrak h}dz$ instead of the original volume element $d{\rm vol}_{g}=x^ddx\sqrt{\det \mathfrak h}dz$. Here, $\mathfrak h=h_0+O(x^{\lambda-2})$ and $z$ is a local coordinate in $L$.
	This is implemented by working `'downstairs'' in the  diagram below, where $\tau=x^{\frac{d+1}{2}}$ is unitary and $\Delta^\tau_g=\tau\Delta_g\tau^{-1}$:
	\begin{equation}\label{diag}
		\begin{array}{ccc}
			D_{\rm max}(\Delta_g)\subset \mathcal H_\gamma^{0,2}(X,g) & \xrightarrow{\,\,\,\,\Delta_g\,\,\,\,} & 	\mathcal H_\gamma^{0,2}(X,g) \\
			\tau	\Big\downarrow &  & \Big\downarrow \tau \\
			D_{\rm max}(\Delta^\tau_g)\subset	x^\gamma	L^2(d{\rm vol}_{g,c}) & \xrightarrow{\,\,\,\,\Delta^\tau_g\,\,\,\,} & x^\gamma	L^2(d{\rm vol}_{g,c}) 
		\end{array}
	\end{equation}
	
	{
		\begin{remark}\label{symm}
			It is immediate to check that, near the singularity, 
			\[
			\langle \Delta_g u,v\rangle_{\mathcal H^{0,2}(X,g)}=\int x^{-2\gamma}v\Delta_g u\,d{\rm vol}_g,
			\]
			so the horizontal maps in (\ref{diag}) define symmetric operators if and only if $\gamma=0$. Notice also that the same conclusion holds for any operator which is formally self-adjoint with respect to $d{\rm vol}_g$.
		\end{remark}
	}
	
	Let $u\in D_{\rm max}(\Delta_g)$. Thus, $v:=\tau u\in D_{\rm max}(\Delta^\tau_g)$ satisfies $x^{-\gamma}v\in L^2(d{\rm vol}_{g,c})$, so that 
	$\mathsf M(v)\in{\mathscr H}(\{{\rm Re}\,\zeta> -\gamma\})$. On the other hand, if 
	\[
	P_0^\tau:=\tau P_0\tau^{-1}=
	\mathsf D^2-2\mathsf D-\frac{(d+1)(d-3)}{4}+\Delta_{h_0}, 
	\]
	then $w:=P_0^\tau v$ satisfies $x^{-2-\gamma}w=x^{-\gamma}\tau\Delta_gu\in L^2(d{\rm vol}_{g,c})$, so that $\mathsf M(w)\in{\mathscr H}(\{{\rm Re}\,\zeta> -2-\gamma\})$.
	By taking Mellin transform,
	\[
	\mathsf M(w)(\zeta,z)=\sigma_{\Delta^\tau_g}(\zeta)\mathsf M(v)(\zeta,z), 
	\]
	where 
	\begin{equation}\label{conor:symb}
		\sigma_{\Delta^\tau_g}(\zeta)=	\zeta^2+2\zeta-\frac{(d+1)(d-3)}{4}+\Delta_{h_0}
	\end{equation}
	is the conormal symbol of $\Delta^\tau_g$.
	The conclusion is that, at least formally, 
	\begin{equation}\label{formally}
		\mathsf M(v)(\zeta,z)=\sigma_{\Delta^\tau_g}^{-1}(\zeta)\mathsf M(w)(\zeta,z), 
	\end{equation}
	but we should properly handle the poles of $\sigma_{\Delta^\tau_g}^{-1}$
	located within the critical strip $\Gamma_{-2-\gamma,-\gamma}$, which we may gather together in the {\em asymptotics set}
	\[
	\Lambda^\tau_{\gamma}:=\left\{\zeta\in\mathbb C;\zeta^2+2\zeta-\frac{(d+1)(d-3)}{4}-\mu=0,\mu\in{\rm Spec}(\Delta_{h_0})\right\}\cap \Gamma_{-2-\gamma,-\gamma}.
	\]
	Here,  $\Gamma_{c,c'}=\{\zeta\in \mathbb C; c<{\rm Re}\,\zeta<c'\}$ for $c<c'$. 
	Alternatively, we may consider  
	\[
	\widetilde\Lambda^{\tau,\pm}_{\gamma}=\left\{\mu\in{\rm Spec}(\Delta_{h_0});\delta_{d,\mu}^{\pm}\in\Gamma_{-1-\gamma,1-\gamma}\right\},
	\]	
	where
	\[
	\delta^\pm_{d,\mu}=\pm\frac{1}{2}\sqrt{(d-1)^2+4\mu}.
	\]
	After applying Mellin inversion to (\ref{formally}) and using the appropriate pseudo-differential calculus \cite{schrohe1999ellipticity,schulze1998boundary}, we obtain
	\begin{equation}\label{expansion}
		v-w=\sum_{\mu\in\widetilde\Lambda^{\tau,+}_{\gamma}}a^+_\mu(z) x^{1-\delta_{d,\mu}^+}+
		\sum_{\mu\in\widetilde\Lambda^{\tau,-}_{\gamma}}
		a^-_\mu(z) x^{1-\delta_{d,\mu}^-},
	\end{equation}	
	where $a^{\pm}_\mu\in\ker(\Delta_h+\mu)$.
	Notice that the terms in the expansion all lie in $\ker P_0^\tau$. Here we are assuming that $d\geq 3$ as this guarantees that $|\delta^\pm_{\mu,d}|\geq (d-1)/2>0$, which by its turn prevents the appearance of log-terms in the expansion; see \cite[Lemma 2.2]{mazzeo2012analytic} for more on this point, but be aware that our normalization for the exponents is slightly different from theirs. In particular, we obtain a quite explicit description of the relevant asymptotics  space:
	\begin{equation}\label{decom:cauchy}
		\mathcal Q(\Delta_g^{\tau})=\oplus_{\mu\in\widetilde\Lambda^{\tau,+}_{\gamma}}E_{\mu}^+ \bigoplus \oplus_{\mu\in\widetilde\Lambda^{\tau,-}_{\gamma}} E_{\mu}^-, \quad 	E_{\mu}^\pm=\langle a^\pm_\mu x^{1-\delta_{d,\mu}^\pm} \rangle.
	\end{equation}	
	As mentioned above, subspaces of this space correspond under conjugation (i.e. `'upstairs'') to the domains of closed extensions of the core Laplacian. Moreover, any such closed extension, say $(\widehat\Delta_g,{\rm Dom}(\widehat\Delta_g))$, is Fredholm with index given by
	\begin{equation}\label{ind:form:fred}
		{\rm ind}\,(\widehat\Delta_g,{\rm Dom}(\widehat\Delta_g))={\rm ind}\,(\Delta_ g,D_{\rm min}(\Delta_g) -\dim\,\frac{{\rm Dom}(\widehat\Delta_g)}{D_{\rm min}(\Delta_g)}.
	\end{equation}
	In any case, from the theory above we easily derive the 
	next result. 
	
	\begin{theorem}\label{map:p:lap}
		The core Laplacian {has a unique closed extension} whenever  $\widetilde\Lambda^{\tau,\pm}_{\gamma}=\emptyset$. In particular, this happens for $\gamma=0$ and $d\geq 3$, {in which case it is essentially self-adjoint (and hence  the corresponding map is Fredholm of index $0$).}
	\end{theorem}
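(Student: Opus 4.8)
The plan is to read off the result directly from the structural description of the asymptotics space $\mathcal Q(\Delta_g^\tau)$ in \eqref{decom:cauchy}. Recall that this space is the direct sum of the one-dimensional spaces $E_\mu^{\pm}=\langle a_\mu^{\pm}\,x^{1-\delta_{d,\mu}^{\pm}}\rangle$ indexed by the eigenvalues $\mu\in\mathrm{Spec}(\Delta_{h_0})$ for which $\delta_{d,\mu}^{\pm}$ lies in the critical strip $\Gamma_{-1-\gamma,\,1-\gamma}$, i.e. by $\mu\in\widetilde\Lambda^{\tau,\pm}_\gamma$. So the whole point is to show that these index sets are empty under the stated hypotheses; once they are, $\mathcal Q(\Delta_g^\tau)=\{0\}$, hence $D_{\mathrm{min}}(\Delta_g)=D_{\mathrm{max}}(\Delta_g)$, the core Laplacian has a unique closed extension, and by the remark following the definition of $\mathcal Q$ the associated map \eqref{map:lap:g} is Fredholm; by \eqref{ind:form:fred} (taking $\widehat\Delta_g$ to be that unique extension) its index equals $\mathrm{ind}\,(\Delta_g, D_{\mathrm{min}}(\Delta_g))$, and since for $\gamma=0$ the operator is symmetric by Remark \ref{symm}, a symmetric operator with a unique closed extension is essentially self-adjoint, forcing index $0$.

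First I would pin down the arithmetic of the exponents. We have $\delta^\pm_{d,\mu}=\pm\frac12\sqrt{(d-1)^2+4\mu}$, and since $\Delta_{h_0}\geq 0$ on the closed link $L$ we get $\mu\geq 0$, whence the radicand $(d-1)^2+4\mu\geq (d-1)^2$ is real and nonnegative; thus $\delta^\pm_{d,\mu}\in\mathbb R$ with $|\delta^\pm_{d,\mu}|\geq (d-1)/2$. (The strict inequality $(d-1)/2>0$, which uses $d\geq 3$, is exactly what rules out coincidences of exponents and the attendant logarithmic terms, as already noted in the text after \eqref{expansion}.) Membership of $\mu$ in $\widetilde\Lambda^{\tau,\pm}_\gamma$ requires $-1-\gamma<\delta^\pm_{d,\mu}<1-\gamma$. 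For $\gamma=0$ this reads $-1<\delta^\pm_{d,\mu}<1$, i.e. $|\delta^\pm_{d,\mu}|<1$. But $|\delta^\pm_{d,\mu}|\geq (d-1)/2\geq 1$ whenever $d\geq 3$, a contradiction; hence $\widetilde\Lambda^{\tau,\pm}_0=\emptyset$, as claimed. More generally, the same computation shows $\widetilde\Lambda^{\tau,\pm}_\gamma=\emptyset$ precisely when the open interval $(-1-\gamma,1-\gamma)$ contains no point of $\{\delta^\pm_{d,\mu}:\mu\in\mathrm{Spec}(\Delta_{h_0})\}$; the stated hypothesis is just a convenient sufficient condition.

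The main obstacle — and it is more a matter of bookkeeping than of genuine difficulty, since the heavy analytic lifting is imported from the cone pseudodifferential calculus of \cite{schrohe1999ellipticity,schulze1998boundary} summarized above — is to make sure the passage from ``$\mathcal Q(\Delta_g^\tau)=\{0\}$'' to ``essentially self-adjoint with index $0$'' is clean. Two small points need care. First, one should note that the perturbation $h$ with $|h|_{g_0}=O(x^\lambda)$, $\lambda\geq 2$, only affects the lower-order ($O(x)$) terms in $P=x^2\Delta_g$ and hence does not change the conormal symbol \eqref{conor:symb}, so the asymptotics set — and therefore $\mathcal Q$ — is computed from $P_0^\tau$ alone, exactly as done in the excerpt; the perturbation contributes at worst to the non-leading part of the expansion \eqref{expansion} and does not create new elements of $\mathcal Q$. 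Second, for the Neumann-boundary version one argues identically away from the conical region: the boundary $\partial X$ sits outside $\mathcal C$, so the local analysis near $p$ is untouched, and imposing the Neumann condition on the (smooth, compact) part merely selects a standard self-adjoint realization there; combining this with the unique closed extension near the singularity again yields a Fredholm operator, which is symmetric for $\gamma=0$ and hence essentially self-adjoint of index $0$. I would state these two observations explicitly and leave the rest to the cited calculus.
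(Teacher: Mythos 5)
Your proposal is correct and follows essentially the same route as the paper: read off $\mathcal Q(\Delta_g^\tau)=\{0\}$ from (\ref{decom:cauchy}) when $\widetilde\Lambda^{\tau,\pm}_{\gamma}=\emptyset$, check via $|\delta^{\pm}_{d,\mu}|\geq (d-1)/2\geq 1$ that this holds for $\gamma=0$, $d\geq 3$, and combine the symmetry at $\gamma=0$ (Remark \ref{symm}) with the unique closed extension to get essential self-adjointness and hence index $0$, exactly as the paper does in the bullet points preceding the theorem and in Remark \ref{Fried:d=3}. Your added observations (the $O(x^{\lambda})$ perturbation not affecting the conormal symbol, and the Neumann-boundary case, which actually belongs to Theorem \ref{self:adj} rather than this statement) are consistent with the paper's treatment and harmless.
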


	\begin{remark}\label{Fried:d=3}
		The last assertion follows by combining Remark \ref{symm} with the fact that  $d\geq 3$ implies $|\delta^{\pm}_{d,\mu}|\geq 1$ for any $\mu\geq 0$, so that $\widetilde\Lambda^{\tau,\pm}_{0}=\emptyset$ indeed; compare with \cite[Theorem 2.1]{mooers1999heat} and \cite[Example 3.5]{coriasco2002differential}. Thus, any $v\in D_{\rm max}(\Delta_g)\subset \mathcal H^{0,2}_0(X,g)$ actually lies in $D_{\rm min}(\Delta_g)=D_{\rm max}(\Delta_g)\cap\mathcal H^{2,2}_{2^-}(X,g)$ \cite[Proposition 2.3]{schrohe2005resolvent}. Of course, this should be viewed as a regularity result. Note also that, since $g=dx^2+x^2\mathfrak h_x$, $\mathfrak h_x=h_0+O(x^{\lambda-2})$, an application of the co-area formula leads to 
		\[
		\lim_{x\to 0}\int_{\{x\}\times L}|\mathsf D^j\partial^\alpha v|^2d{\rm vol}_{\mathfrak h_x}=0,\quad j+|\alpha|\leq 2,
		\]
		for any 	
		$v\in  \mathcal H^{0,2}_0(X,g)$. This clearly implies that integration by parts involving $v\in D_{\rm max}(\Delta_g)$ does not yield a contribution coming from the singularity, which is consistent with Theorem \ref{map:p:lap}. 
	\end{remark}
	
	The next result extends the previously established Fredholmness to a suitable range of weights.
	
	\begin{theorem}\label{map:p:lap:cor}
		The Laplacian mapping (\ref{map:lap:g}) is Fredholm of index $0$ for $\gamma\in((4-n)/2,n/2)$ if $n\geq 5$ and $\gamma\in[0,2)$ if $n=4$.
	\end{theorem}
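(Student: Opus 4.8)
The plan is to leverage Theorem~\ref{map:p:lap}, which settles the case $\gamma=0$, and then propagate Fredholmness to the interval of weights claimed by using the structure of the asymptotics set $\widetilde\Lambda^{\tau,\pm}_\gamma$ together with the general fact (recalled in the text) that Fredholmness persists except at a discrete set of weights, with the index jumping only across those exceptional values. Concretely, from $\delta^\pm_{d,\mu}=\pm\tfrac12\sqrt{(d-1)^2+4\mu}$ with $\mu\geq 0$ and $d=n-1$, the smallest possible value of $|\delta^\pm_{d,\mu}|$ is $(d-1)/2=(n-2)/2$, attained at $\mu=0$. Thus the real parts of the roots $\zeta=-1\pm\delta^\pm_{d,\mu}$ that can enter the critical strip $\Gamma_{-1-\gamma,1-\gamma}$ are controlled: an exceptional weight occurs precisely when some $\delta^\pm_{d,\mu}$ has real part equal to $\pm(1-\gamma)$ or $\pm(1+\gamma)$ — equivalently when $1-\gamma$ (resp.\ $-1-\gamma$) coincides with one of the $\pm\delta^\pm_{d,\mu}$. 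The first such coincidence as $\gamma$ increases from $0$ happens at $\gamma$ with $1-\gamma=-(n-2)/2$, i.e.\ $\gamma=n/2$, and as $\gamma$ decreases from $0$ at $1-\gamma=(n-2)/2$... wait, that is $\gamma = (4-n)/2$; one checks the symmetric root on the other side of the strip gives the same threshold. Hence $\widetilde\Lambda^{\tau,\pm}_\gamma=\emptyset$ for all $\gamma$ in the open interval $((4-n)/2,\,n/2)$ when $n-2\geq 3$, i.e.\ $n\geq 5$.

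For $n\geq 5$ I would argue as follows: by the explicit description \eqref{decom:cauchy} of $\mathcal Q(\Delta_g^\tau)$, the emptiness of $\widetilde\Lambda^{\tau,\pm}_\gamma$ on the stated interval means the core Laplacian has a unique closed extension there, so the map \eqref{map:lap:g} is Fredholm for each such $\gamma$. To pin the index to $0$, I would invoke the index-jump formula \eqref{ind:form:fred}: since the asymptotics space is trivial throughout the connected interval $((4-n)/2,n/2)$, the index is constant on that interval; it equals $0$ at $\gamma=0$ by Theorem~\ref{map:p:lap} (essential self-adjointness forces index $0$ for a symmetric Fredholm operator), hence it is $0$ on the whole interval. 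For the perturbation term: the true operator is $\Delta_g$, not $\Delta_{g_0}$, but since $|h|_{g_0}=O(x^\lambda)$ with $\lambda\geq 2$, the difference $\Delta_g-\Delta_{g_0}$ has coefficients vanishing at $x=0$ to high enough order that it does not change the conormal symbol and is a compact (indeed, weight-improving) perturbation of the model; this is exactly why the conormal-symbol analysis around $P_0$ governs everything. So Fredholmness and index-$0$ are unaffected.

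For the borderline case $n=4$ (so $d=3$): here $(n-2)/2=1$, so the minimal $|\delta^\pm_{d,\mu}|$ equals $1$, attained at $\mu=0$. This means that for $\gamma=0$ the relevant root sits exactly on the boundary line $\mathrm{Re}\,\zeta=\pm(1-\gamma)$, which is the degenerate situation where the half-open interval $[0,2)$ replaces the open one: at $\gamma=0$ one still has $\widetilde\Lambda^{\tau,\pm}_0=\emptyset$ because the strip $\Gamma_{-1-\gamma,1-\gamma}=\Gamma_{-1,1}$ is open and the root $\delta^+_{3,0}=1$ (giving $\zeta=0$) lies on its closed boundary, not its interior — this is the content of Remark~\ref{Fried:d=3}. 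As $\gamma$ increases past $0$ up to but not including $2$, the strip widens on one side only and the first root to genuinely enter the open strip is again the one forcing $\gamma=2$ (from $1-\gamma=-1$). Thus the map is Fredholm of index $0$ on $[0,2)$ by the same constancy argument anchored at $\gamma=0$; the right endpoint must be excluded since $\gamma=2$ is an exceptional weight.

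\textbf{Main obstacle.} The delicate point is the $n=4$ endpoint bookkeeping: one must be careful that at $\gamma=0$, $d=3$ the root $\zeta=0$ of the conormal symbol sits precisely on the contour $\Gamma_0$, so the naive "avoid poles in the open strip" statement has to be replaced by the more careful analysis (no log terms because $|\delta^\pm|\geq 1$, and the boundary root does not obstruct minimality of the closure), which is what Theorem~\ref{map:p:lap} and Remark~\ref{Fried:d=3} already encode. Everything else — the explicit threshold computation $(4-n)/2$ and $n/2$, the perturbation argument for passing from $g_0$ to $g$, and the index-constancy — is routine given the cone calculus recalled above.
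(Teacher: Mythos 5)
Your overall strategy (anchor the index at $\gamma=0$ via Theorem \ref{map:p:lap}, propagate along an interval of weights free of exceptional values, and note that the perturbation $h=O(x^\lambda)$, $\lambda\geq 2$, does not affect the conormal symbol) is the same as the paper's. However, there is a genuine gap in how you identify the exceptional weights: you work with the strip-based set $\widetilde\Lambda^{\tau,\pm}_{\gamma}$, which governs the quotient $D_{\rm max}/D_{\rm min}$, and you assert it is empty on all of $((4-n)/2,n/2)$ (resp.\ essentially on $[0,2)$ for $n=4$). That assertion is false. Since $\mu=0\in{\rm Spec}(\Delta_{h_0})$, the value $\delta^-_{d,0}=-(n-2)/2$ enters the strip $\Gamma_{-1-\gamma,1-\gamma}$ through its \emph{left} boundary $-1-\gamma$ as soon as $\gamma>(n-4)/2$ (for $n=4$, as soon as $\gamma>0$; for $n=5$, as soon as $\gamma>1/2$). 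This is exactly the coincidence you dismiss with ``the symmetric root on the other side of the strip gives the same threshold'' --- it does not: the left boundary yields the threshold $\gamma=(n-4)/2$, not $\gamma=n/2$. Consequently, on most of the claimed interval the asymptotics space (\ref{decom:cauchy}) is nontrivial, there are several closed extensions, and your mechanism ``$\widetilde\Lambda^{\tau,\pm}_\gamma=\emptyset \Rightarrow$ unique closed extension $\Rightarrow$ (\ref{map:lap:g}) Fredholm'' is simply unavailable there. Used correctly, your criterion only covers $(4-n)/2\leq\gamma\leq(n-4)/2$, which is far short of the statement (for $n=4$ it gives only $\gamma=0$).

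What is needed --- and what the paper invokes, citing the cone calculus of Schrohe--Seiler and Schulze --- is a different criterion: the bounded weighted map (\ref{map:lap:g}) remains Fredholm with \emph{unchanged} index as long as the conormal symbol $\sigma_{\Delta_g^\tau}$ has no roots on the single critical line $\Gamma_{-\gamma}$ (the set $\Xi^\tau_\gamma$ in the paper), regardless of roots lying strictly inside the width-two strip; such interior roots alter the lattice of closed extensions of the unbounded operator but not the Fredholmness of the weighted map. Since the roots are $\zeta=-1\pm\delta$ with $\delta\geq(n-2)/2$, the condition $-\gamma\neq -1\pm\delta$ holds precisely for $\gamma\in((4-n)/2,n/2)$, which gives the full interval; the index is then pinned to $0$ at $\gamma=0$ by Theorem \ref{map:p:lap} when $n\geq 5$, and for $n=4$ one first obtains $\gamma\in(0,2)$ and adjoins the endpoint $\gamma=0$ separately via Theorem \ref{map:p:lap} and Remark \ref{Fried:d=3} (your handling of that endpoint is essentially right). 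So the fix is to replace your strip-emptiness argument by the critical-line criterion; as written, the proposed proof does not establish Fredholmness beyond $|\gamma|\leq(n-4)/2$.
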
 
	
	\begin{proof}
		As already observed, Fredholmness and the associated index do not depend on the pair $(s,p)$ but only on $\gamma$. The key point now is that, as it is manifest from (\ref{ind:form:fred}), the strategy to preserve Fredholmness as $\gamma$ varies involves precluding the crossing of  poles of $\sigma_{\Delta_g^\tau}^{-1}$ through the critical line $\Gamma_{-\gamma}$. Precisely, if we set
		\[
		\Xi^\tau_{\gamma}:=\left\{\zeta\in\mathbb C;\zeta^2+2\zeta-\frac{(d+1)(d-3)}{4}-\mu=0, \mu\in{\rm Spec}(\Delta_{h_0})\right\}\cap\Gamma_{-\gamma},
		\] 	
		the relevant result is that $\Delta_g$ remains Fredholm with the {\em same} index as long as $\Xi^\tau_{\gamma}=\emptyset$; see  \cite[Section 3]{schrohe1999ellipticity} or \cite[Subsection 2.4.3]{schulze1998boundary}. Certainly, this is the case for all $\gamma\in((4-n)/2, n/2)$. Since this interval always contains $\gamma=0$ for $n\geq 5$, the result follows in this case from Theorem \ref{map:p:lap}. For $n=4$, although we initially must choose $\gamma\in (0,2)$, we can extend this to include $\gamma=0$ again in view of Theorem \ref{map:p:lap}.
	\end{proof}

	We now turn to the mapping properties of $A=-\alpha\Delta_{g}+b$; compare with (\ref{oper:brac}). Since
	$x^2 A=-\alpha P+O(x^2)$,
	we see that the zero order term in $A$ does not play a role in determining the conormal symbol. Thus, $\sigma_{{A}^\tau}=-\alpha\sigma_{\Delta^\tau_g}$ and we may use the argument above to conclude the first  part of Theorem \ref{self:adj}. 
	To handle the remaining case, we recall that imposing Neumann boundary condition on $\partial X$ does not affect the index. This may be directly checked for $\Delta_g$ and $\gamma=0$ by observing that, by Theorem \ref{map:p:lap} and Remark \ref{Fried:d=3}, integration by parts does not pick a contribution from the singularity. Hence, an application of Green's formula gives that both the  kernel and the cokernel of $\Delta_g$ are formed by constant functions, so the index vanishes. It follows that $A$ is Fredholm with index $0$ as well.  
	
	\begin{remark}\label{upstairs}
		The authors in \cite{schrohe1999ellipticity} and \cite{schulze1998boundary} work `'upstairs'' in respect to the diagram (\ref{diag}), that is, before applying the conjugation $\tau=x^{n/2}$, so instead of $\Xi^\tau_{\gamma}$ they consider 
		\[
		\Xi_{\gamma}:=\left\{\zeta\in\mathbb C;\zeta^2-(d-1)\zeta-\mu=0, \mu\in{\rm Spec}(\Delta_{h_0})\right\}\cap\Gamma_{\frac{n}{2}-\gamma}.
		\] 	
		Notice that, as expected, the polynomial equation here is the Mellin transform of $P_0$ whereas the critical line is shifted to the right by $n/2$. It is immediate to check that both approaches produce the same numerical ranges for Fredholmness.
	\end{remark}
	
	\subsection{The Dirac operator}\label{dir:map:sub}
	We now explain  how the theory above may be adapted to establish the mapping properties of the conical Dirac operator
	\begin{equation}\label{dirac:map}
		{\dirac}:\mathcal H^{s,p}_\gamma(S_{X_0})\to \mathcal H^{s-1,p}_{\gamma-1}(S_{X_0})
	\end{equation}
	in the appropriate scale of Mellin-Sobolev spaces. We assume that $\partial X=\emptyset$. As usual, we first consider the core Dirac operator
	\begin{equation}\label{dirac:map:c}
		({\dirac},\Gamma_{\rm cpt}(S_{X_0})):\Gamma_{\rm cpt}(S_{X_0})\subset \mathcal H^{0,2}_\gamma(S_{X_0})\to H^{0,2}_\gamma(S_{X_0}),
	\end{equation}
	and our aim is to give conditions on $\gamma$ to make sure that the associated asymptotics space is trivial. 
	
	It follows from the general computations in \cite[Lemma 2.2]{albin2016index} that, in the conical region,
	\begin{equation}\label{induced:dir}
		{\dirac}={\mathfrak c}(\partial_x)\left(\partial_x+\frac{d}{2x}+\frac{1}{x}\prescript{c}{}{D}\right)+O(1),
	\end{equation}
	where $\mathfrak c$ is Clifford product and $\prescript{c}{}{D}$ is the Dirac operator of the spin manifold $(L,h_0)$; this also follows more directly from (\ref{dir:fermi}) if we appeal to the conformal transformation properties of the mean curvature as in (\ref{conf:bd}) and of the Dirac operator \cite[Proposition 2.31]{bourguignon2015spinorial}. Hence,
	\[
	\mathscr P:=x{\dirac}={\mathfrak c}(\partial_x)\mathscr P_0+O(x), \quad \mathscr P_0=\mathsf D+\frac{d}{2}+\prescript{c}{}{D}.
	\]
	By working `'downstairs'', we get
	\[
	\mathscr P_0^\tau:=\tau\mathscr P_0\tau^{-1}=\mathsf D-\frac{1}{2}+ \prescript{c}{}{D},
	\] 
	and after Mellin transforming this we see that the corresponding asymptotics  set is 
	\[
	\Theta^\tau_\gamma:=\left\{\zeta\in \mathbb C; \zeta+\frac{1}{2}-\vartheta=0,\vartheta\in{\rm Spec}(\prescript{c}{}{D})\right\}\cap\Gamma_{-1-\gamma,-\gamma}.  
	\]
	By arguing exactly as above, we easily obtain the following result, which should be compared with \cite[Theorem 1.1]{albin2016index}.
	
	\begin{theorem}\label{albin:mellin}
		The core Dirac (\ref{dirac:map:c}) {has a unique closed extension} whenever $\Theta^\tau_\gamma=\emptyset$. In particular, this happens for $\gamma=0$ 
		if the `'geometric Witt assumption''
		\begin{equation}\label{geo:witt}
			{\rm Spec}(\prescript{c}{}{D})\cap\left(-\frac{1}{2},\frac{1}{2}\right)=\emptyset
		\end{equation}
		is satisfied. In this case, the Dirac map (\ref{dirac:map})
		is Fredholm of index $0$ if $0\leq \gamma<1$, {with self-adjointness taking place for $\gamma=0$}. 
	\end{theorem}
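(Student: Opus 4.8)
\textbf{Proof proposal for Theorem \ref{albin:mellin}.}
The plan is to run the conical Mellin-Sobolev machinery developed for the Laplacian in Subsection \ref{lap:map:sub}, but now for the first-order operator $\dirac$. First I would pass to the rescaled operator $\mathscr P = x\dirac = \mathfrak c(\partial_x)\mathscr P_0 + O(x)$ and, as in the diagram (\ref{diag}), conjugate by the unitary $\tau = x^{(d+1)/2}$ to move ``downstairs'' to the weighted space $x^\gamma L^2(d{\rm vol}_{g,c})$; the computation of the resulting model operator $\mathscr P_0^\tau = \mathsf D - \tfrac12 + \prescript{c}{}{D}$ is already recorded in the excerpt. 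Applying the Mellin transform $\mathsf M$, which sends $\mathsf D$ to multiplication by $-\zeta$, the conormal symbol becomes $-\zeta - \tfrac12 + \prescript{c}{}{D}$, so it fails to be invertible exactly at the points of $\Theta^\tau_\gamma$ inside the critical strip $\Gamma_{-1-\gamma,-\gamma}$. As in the derivation of (\ref{expansion}), whenever $\Theta^\tau_\gamma = \emptyset$ one shows, by Mellin inversion together with the pseudodifferential cone calculus of \cite{schrohe1999ellipticity,schulze1998boundary}, that every $u \in D_{\rm max}(\dirac)$ already lies in $D_{\rm min}(\dirac)$; equivalently $\mathcal Q(\dirac) = \{0\}$, so the core operator has a unique closed extension and the map (\ref{dirac:map}) is Fredholm.

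For the identification of $\gamma = 0$ as admissible under (\ref{geo:witt}): the elements of $\Theta^\tau_0$ correspond to eigenvalues $\vartheta$ of $\prescript{c}{}{D}$ with $\zeta = \vartheta - \tfrac12$ satisfying $-1 < \mathrm{Re}\,\zeta < 0$, i.e. $-\tfrac12 < \vartheta < \tfrac12$; since $\prescript{c}{}{D}$ is self-adjoint its spectrum is real, so this set is empty precisely when ${\rm Spec}(\prescript{c}{}{D}) \cap (-\tfrac12,\tfrac12) = \emptyset$, which is the geometric Witt assumption. Then I would extend Fredholmness to the range $0 \le \gamma < 1$ by the same continuity-in-the-weight argument used in the proof of Theorem \ref{map:p:lap:cor}: the index is locally constant in $\gamma$ and jumps only when a pole of the inverse conormal symbol crosses the critical line $\Gamma_{-\gamma}$, and no such crossing occurs for $\gamma \in [0,1)$ precisely because (\ref{geo:witt}) keeps the relevant eigenvalues out of the half-open interval $[-\tfrac12, \tfrac12)$ determined by that range.

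It remains to pin down the index as $0$. For $\gamma = 0$ this follows from self-adjointness: by Remark \ref{symm} (whose reasoning applies verbatim to any operator formally self-adjoint with respect to $d{\rm vol}_g$, in particular to $\dirac$), the horizontal map in the analogue of (\ref{diag}) is symmetric when $\gamma = 0$, and having a unique closed extension it is essentially self-adjoint; an essentially self-adjoint Fredholm operator has vanishing index. For the remaining $\gamma \in (0,1)$ the index equals the one at $\gamma = 0$ by the local constancy just invoked, hence is again $0$.

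\textbf{Main obstacle.} The delicate point is the regularity step $D_{\rm max}(\dirac) = D_{\rm min}(\dirac)$ when $\Theta^\tau_\gamma = \emptyset$: one must verify that no logarithmic terms appear in the asymptotic expansion (here this is automatic since the conormal symbol of the first-order operator has simple, non-coincident roots in $\zeta$, unlike the Laplacian case where one needed $d \ge 3$) and that the parametrix construction of the cone calculus indeed delivers the exact-domain identification rather than merely a Fredholm estimate. I expect this to be a routine invocation of \cite{schrohe1999ellipticity,schrohe2005resolvent} — mirroring Remark \ref{Fried:d=3} — but it is where the real content sits, and the co-area argument of that remark should be adapted to show that integration by parts for $v \in D_{\rm max}(\dirac)$ picks up no boundary contribution at the singularity.
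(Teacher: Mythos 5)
Your proposal is correct and follows essentially the same route as the paper, whose proof of Theorem \ref{albin:mellin} consists precisely of transplanting the Mellin/conormal-symbol analysis of Subsection \ref{lap:map:sub} (conjugation by $\tau$, the model operator $\mathscr P_0^\tau=\mathsf D-\tfrac12+\prescript{c}{}{D}$, emptiness of the asymptotics set giving $D_{\rm max}=D_{\rm min}$, symmetry at $\gamma=0$ as in Remark \ref{symm} yielding essential self-adjointness and index $0$, and weight-continuity of the index for the remaining $\gamma$). The only quibble is that the eigenvalues relevant for a crossing with $\gamma\in[0,1)$ lie in $\bigl(-\tfrac12,\tfrac12\bigr]$ rather than $\bigl[-\tfrac12,\tfrac12\bigr)$, so the endpoint $\gamma=0$ is covered not by the no-crossing criterion but by the essential self-adjointness argument you give anyway, exactly as the paper handles the analogous endpoint for the Laplacian when $n=4$.
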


	It turns out that non-negative scalar curvature in the conical region implies (\ref{geo:witt}) \cite[Theorem 1.3]{albin2016index} and hence ${\rm ind}\,{\dirac}=0$ by the previous result. But of course this says nothing about the index of the chiral Dirac operator ${\dirac}^+$, which is the invariant that appears prominently in applications. As in the smooth case, in order to obtain the corresponding vanishing result we need some more control on the scalar curvature. 
	
	\begin{corollary}\label{vanis}
		If the scalar curvature is non-negative everywhere and positive somewhere then (\ref{dirac:map}) is invertible if $0\leq\gamma <1$. In particular, ${\rm ind}\,{\dirac}^+=0$.
	\end{corollary}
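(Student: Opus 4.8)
The plan is to deduce invertibility of (\ref{dirac:map}) from the fact that, under the present hypotheses, it is already known to be Fredholm of index $0$: indeed, non-negative scalar curvature in the conical region forces the geometric Witt assumption (\ref{geo:witt}), so Theorem \ref{albin:mellin} applies and gives Fredholmness of index $0$ for every $\gamma\in[0,1)$. It therefore suffices to prove that $\ker\dirac=\{0\}$, and the assertion about ${\rm ind}\,\dirac^+$ will then drop out of the chirality decomposition. By Proposition \ref{sob:emb}, $\mathcal H^{s,p}_\gamma(S_{X_0})\hookrightarrow\mathcal H^{s,p}_0(S_{X_0})$ whenever $\gamma\geq0$, so any element of $\ker\dirac$ at weight $\gamma\in[0,1)$ already lies in the kernel at weight $0$; hence it is enough to treat $\gamma=0$. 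This is the exact analogue of the reduction carried out for the Laplacian in the proof of Corollary \ref{self:adj:cor}.

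For $\gamma=0$ the geometric Witt assumption yields $\Theta^\tau_0=\emptyset$, so by Theorem \ref{albin:mellin} the core Dirac operator (\ref{dirac:map:c}) is essentially self-adjoint and $D_{\rm min}(\dirac)=D_{\rm max}(\dirac)$. Given $\psi\in\ker\dirac$, there are thus $\psi_j\in\Gamma_{\rm cpt}(S_{X_0})$ with $\psi_j\to\psi$ and $\dirac\psi_j\to\dirac\psi=0$, both in $\mathcal H^{0,2}_0(S_{X_0})=L^2(S_{X_0},d{\rm vol}_g)$. Since $X_0$ has no boundary and each $\psi_j$ is compactly supported, the Weitzenb\"ock formula (\ref{dec}) integrates to
\[
\|\dirac\psi_j\|^2=\|\nabla\psi_j\|^2+\frac14\int_{X_0}\kappa_g\,|\psi_j|^2\,d{\rm vol}_g,
\]
with no contribution from the singularity. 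As $j\to\infty$ the left-hand side tends to $0$, so each of the two non-negative terms on the right must tend to $0$: from $\nabla\psi_j\to0$ in $L^2$ we get $\nabla\psi=0$, hence $\psi$ is parallel and $|\psi|$ is constant on the connected manifold $X_0$, while Fatou's lemma applied along an a.e.\ convergent subsequence gives $\int_{X_0}\kappa_g\,|\psi|^2\,d{\rm vol}_g=0$. Since $\kappa_g$ is smooth on $X_0$ and positive somewhere, it is positive on a non-empty open set, and therefore $|\psi|\equiv0$, i.e.\ $\psi=0$. Hence $\dirac$ is injective, and being Fredholm of index $0$ it is invertible for all $0\leq\gamma<1$.

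Finally, because $\dirac$ interchanges the two chirality factors it is off-diagonal with respect to $S_{X_0}=S^+_{X_0}\oplus S^-_{X_0}$, so its invertibility at $\gamma=0$ is equivalent to the invertibility of both $\dirac^+$ and $\dirac^-=(\dirac^+)^*$. In particular $\ker\dirac^+=\{0\}$ and ${\rm coker}\,\dirac^+\cong\ker\dirac^-=\{0\}$, whence ${\rm ind}\,\dirac^+=0$, which is the value entering the index formula (\ref{indfor:alb:gell}).

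The step I expect to be the crux is justifying the integration by parts leading to the displayed Bochner identity without a boundary contribution at the conical tip: the background scalar curvature may blow up like $x^{-2}$ there (cf.\ (\ref{scal:asym})), so one cannot simply pair (\ref{dec}) with $\psi$ directly. The device of approximating $\psi\in D_{\rm max}(\dirac)=D_{\rm min}(\dirac)$ by compactly supported spinors and then invoking Fatou's lemma --- in the same spirit as Remark \ref{Fried:d=3} --- is precisely what makes the argument rigorous; alternatively one could extract, from $D_{\rm min}(\dirac)=D_{\rm max}(\dirac)$, the improved weighted regularity of $\psi$ and kill the boundary term directly via a co-area estimate as in that remark.
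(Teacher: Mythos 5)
Your proof is correct and follows essentially the same route as the paper: reduce to injectivity at $\gamma=0$, integrate the Weitzenb\"ock formula (\ref{dec}) with no contribution from the singularity, and conclude by the standard Lichnerowicz vanishing argument, with Fredholmness of index $0$ from Theorem \ref{albin:mellin} giving invertibility. Your approximation by compactly supported spinors via $D_{\rm min}=D_{\rm max}$ together with Fatou's lemma is just a more detailed justification of the step the paper states in one line, namely that integration by parts involving $\psi\in\ker\dirac$ is insensitive to the singularity.
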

	
	\begin{proof}
		It suffices to check that ${\dirac}$ is injective and of course we may assume that $\gamma=0$. By Theorem \ref{albin:mellin}, integration by parts involving 
		$\psi\in {\rm ker}\,{\dirac}$
		is insensitive to the singularity. 
		This immediately yields the integral version of (\ref{dec}), namely, 
		\begin{equation}\label{weit:tw:int}
			\int_{X_0}\left(|\nabla\psi|^2+\frac{\kappa_g}{4}|\psi|^2\right)=\int_\Sigma\left(\langle
			D\varphi,\varphi\rangle-\frac{H}{2}|\varphi|^2\right), \quad \varphi=\psi|_\Sigma,
		\end{equation}
		where $\Sigma=\partial X$.
		Since we are assuming that $\Sigma=\emptyset$, this reduces to 
		\[
		\int_{X_0}\left(|\nabla\psi|^2+\frac{\kappa_g}{4}|\psi|^2\right)=0,
		\]
		and the result follows by means of the usual vanishing argument.
	\end{proof}

	\begin{remark}\label{vanis:bd}
		We  obtain similar results if suitable boundary conditions are imposed along a non-empty  boundary $\Sigma=\partial X$ located outside the conical region. The most commonly used here are the APS discussed in Section \ref{con:ind:for:s} and the MIT bag \cite[Section 6.2]{de2020heat}, as any spinors $\psi$ and $\psi'$  meeting one of these conditions necessarily satisfy $\langle \,\prescript{c}{}{D}\psi,\psi'\rangle=0$.  In particular, 
		the corresponding realizations of $\dirac$ are self-adjoint. In the former case, it follows that the chiral Dirac operators $\dirac^+_{\Ec,\geq 0}$ and $\dirac^-_{\Ec,>0}$ are adjoint to each other. Also,
		by (\ref{weit:tw:int}) we obtain a vanishing result for $\ker \dirac$ if we additionally assume that $\partial X$ is mean convex ($H\geq 0$).   
	\end{remark}

	\bibliographystyle{alpha}
	\bibliography{karea-sing-comp-arxiv}

\end{document}